\newtheorem{theorem}{Theorem}[section]
\newtheorem{proposition}[theorem]{Proposition}
\numberwithin{equation}{section}
\begin{document} 
\title[]{2-knot homology and Yoshikawa move} 

\author[]{Hiroshi MATSUDA} 

\address{
Faculty of Science, 
Yamagata University, Yamagata 990-8560, JAPAN}

\email{matsuda@sci.kj.yamagata-u.ac.jp} 

\thanks{The author is partially supported by 
JSPS KAKENHI Grant number 15K04865.}

\maketitle 

\begin{abstract} 
Ng constructed an invariant of knots in ${\mathbb{R}}^3$, 
a combinatorial knot contact homology. 
Extending his study, 
we construct an invariant of surface-knots in ${\mathbb{R}}^4$ 
using marked graph diagrams. 
\end{abstract}

\section{Introduction}

Topological invariants of knots in ${\mathbb{R}}^3$ are 
constructed by Ng \cite{ng1}, \cite{ng2}, \cite{ng-frame}, \cite{ng3}, 
in a combinatorial method. 
These invariants are 
equivalent to 
the {\it knot contact homology}, 
constructed by Ekholm, Etnyre, Ng, Sullivan \cite{kch} 
in symplectic topology, 
and extended by Cieliebak, Ekholm, Latschev, Ng \cite{cieliebak}. 
The knot contact homology 
detects several classes of knots \cite{lidman-gordon}, and 
an enhancement of the knot contact homology 
is 
a complete invariant of knots \cite{ekholm-ng-shende}.

A {\it surface}-{\it knot} is a closed connected oriented surface 
embedded locally flatly in ${\mathbb{R}}^4$. 
A surface-knot is 
represented by an oriented marked graph diagram, 
an oriented knot diagram 
equipped with 4-valent marked vertices \cite{lomonaco}, \cite{yoshikawa}. 
Extending Ng's construction of knot invariants, 
we define in $\S$2 a differential graded algebra $(CY(D), \partial)$ 
associated with an oriented marked graph diagram $D$ 
that represents a surface-knot in ${\mathbb{R}}^4$. 

\begin{theorem} \label{main-theorem} 
Let $D_0$, $D_1$ denote diagrams of an oriented marked graph 
representing a surface-knot in ${\mathbb{R}}^4$. 
Then $(CY(D_0), \partial)$ is stably tame isomorphic to $(CY(D_1), \partial)$. 
\end{theorem}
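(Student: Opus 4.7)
The plan is to invoke Yoshikawa's theorem on marked graph diagrams, which states that two oriented marked graph diagrams represent ambient isotopic surface-knots in ${\mathbb{R}}^4$ if and only if they are related by a finite sequence of the eight local Yoshikawa moves $\Gamma_1,\dots,\Gamma_8$. The first three are the classical Reidemeister moves, while the remaining moves involve marked vertices in various ways. Since stable tame isomorphism is closed under composition, it suffices to produce, for each move relating two diagrams $D$ and $D'$, an explicit stable tame isomorphism $(CY(D),\partial)\to(CY(D'),\partial)$, and then to concatenate these along any Yoshikawa-move sequence joining $D_0$ to $D_1$.

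The first step is to dispatch the three Reidemeister moves in a region of the diagram disjoint from any marked vertex. Because the generators of $CY$ and the local contributions to $\partial$ near a classical crossing are modeled on Ng's knot DGA, I would adapt Ng's explicit stable tame isomorphisms for R1, R2, R3 almost verbatim; the only new point to check is that generators associated to distant marked vertices are carried along as passive spectators and do not interfere with the local algebraic manipulation.

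Next I would treat the Yoshikawa moves supported near a single marked vertex, namely those that pass a classical strand past a marked vertex together with the purely local reconfigurations of a marking. For each such move I would enumerate the generators of $CY(D)$ and $CY(D')$ that are added, removed, or altered; stabilize each side by the appropriate canceling pairs of generators in adjacent degrees; and then write down a tame automorphism of the stabilized algebras that matches the two DGAs. The essential verification is $\partial$-compatibility, which reduces to a finite local calculation once the prescription for how a marked vertex contributes to $\partial$ (from $\S 2$) has been unpacked.

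The main obstacle will be the Yoshikawa moves that genuinely change the abstract marked graph, in particular the saddle- or band-type moves which simultaneously alter the crossing set and the marking set. These carry the surface-theoretic content of the invariant, and proving that the DGA changes only by a stable tame isomorphism requires careful bookkeeping of which pairs of generators must be stabilized, together with a sign- and degree-accurate verification that the constructed tame isomorphism intertwines the differentials at every affected generator. Once this is done for each of the remaining Yoshikawa moves, composing the resulting stable tame isomorphisms along a move-sequence from $D_0$ to $D_1$ yields the theorem.
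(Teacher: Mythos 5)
Your proposal follows essentially the same route as the paper: reduce to the move theorem for marked graph diagrams (which the paper cites from Kearton--Kurlin, Swenton, and Kim--Joung--Lee rather than Yoshikawa alone) and then verify, move by move, that the differential graded algebra changes only by explicit tame isomorphisms and (de)stabilizations, composing these along the move sequence. This is exactly the structure of the paper's \S\S 3--11, so the plan is correct in outline; the substance of the paper lies in the explicit destabilizations and tame isomorphisms for each move, which your proposal correctly identifies as the remaining work.
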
 

Theorem \ref{main-theorem} shows that 
the stably tame isomorphism class of $(CY(D), \partial)$ is an invariant of $F$, 
where $D$ denotes a diagram of an oriented marked graph 
representing a surface-knot $F$ in ${\mathbb{R}}^4$. 
Therefore the homology of $(CY(D), \partial)$ is an invariant of $F$, 
which we denote by $HY(F)$ and call {\it Yoshikawa homology}. 
In $\S \S$3-11, we give a proof of Theorem \ref{main-theorem}. 
In $\S$12, we show that 
the 0-dimensional homology $HY_0(F)$ distinguishes 
the spun-trefoil \cite{artin} from the 2-twist spun-trefoil \cite{zeeman}. 

\begin{theorem} \label{yoshikawa-example} 
Let $T^0(2, 3)$ denote the spun-trefoil, and 
let $T^2(2, 3)$ denote the 2-twist spun-trefoil in ${\mathbb{R}}^4$. 
Then 
$HY_0(T^0(2, 3))$ is not 
isomorphic to $HY_0(T^2(2, 3))$. 
\end{theorem}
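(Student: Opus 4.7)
The plan is to exhibit explicit marked graph diagrams $D_0$ for $T^0(2,3)$ and $D_2$ for $T^2(2,3)$ taken from the standard tables (for example the low ch-number presentations in Yoshikawa), compute the differential graded algebras $(CY(D_0),\partial)$ and $(CY(D_2),\partial)$ directly, and then distinguish their zeroth homology algebras by an algebraic invariant. By Theorem \ref{main-theorem} any choice of diagram for each surface-knot is fine, so I would prefer diagrams with as few arcs, crossings and marked vertices as possible to keep the presentations small.

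First I would enumerate the oriented arcs of each diagram to obtain the degree-$0$ generators of $CY(D_i)$, then read off the relations $\partial(\mbox{generator})$ arising from each classical crossing and from each $4$-valent marked vertex according to the definition given in $\S 2$. Discarding higher-degree generators leaves a finite presentation of $HY_0(T^0(2,3))$ and $HY_0(T^2(2,3))$ as quotients of a (non-commutative) free algebra by the ideal generated by the degree-$0$ relations.

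Second, I would simplify each presentation by using the relations coming from the simplest crossings to eliminate redundant arc-generators, in the spirit of Ng's reduction of combinatorial knot contact homology to the cord algebra. I expect this to produce, for each of the two surface-knots, a manageable finite presentation whose underlying variety (or whose abelianization) reflects information about the group of the knot complement. Since $\pi_1(\mathbb R^4 \setminus T^2(2,3))$ is known (by Zeeman) to be very different, in particular finite in a controlled sense, from $\pi_1(\mathbb R^4 \setminus T^0(2,3))=\pi_1(\mathbb R^3\setminus 3_1)$, the two algebras should detect this.

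Third, to exhibit a concrete non-isomorphism I would use a computable invariant of the isomorphism class of $HY_0$: the abelianization $HY_0^{\mathrm{ab}}$, the set of algebra homomorphisms $HY_0 \to M_n(\mathbb F_p)$ for small $n$ and $p$, or, if $HY_0$ is commutative after a natural quotient, the dimension and number of components of the corresponding affine variety. Showing that these counts (or dimensions) differ for the two knots concludes the proof.

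The main obstacle will be the combinatorial bookkeeping in the second step: the algebras are non-commutative and the differential contains many terms per marked vertex, so simplifying to an effectively computable presentation requires a careful choice of diagram and disciplined elimination of generators. Once a small enough presentation is in hand, the actual distinguishing invariant (for instance counting augmentations to a small matrix algebra) is a finite check.
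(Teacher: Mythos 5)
Your plan is essentially the paper's own proof: the author takes Yoshikawa's diagram $8_1$ for $T^0(2,3)$ and Inoue's diagram for $T^2(2,3)$, computes $HY_0$ of each as an explicit quotient of ${\mathbb{Z}}[\mu,\mu^{-1}]\langle x,y\rangle$, and then counts algebra maps to ${\mathbb{Z}}/3{\mathbb{Z}}$ (the $n=1$, $p=3$ case of your proposed invariant), finding three augmentations for the spun trefoil and two for the 2-twist spun trefoil. The remarks about $\pi_1$ of the complement are only motivation and play no role, but the substantive strategy coincides with the paper's.
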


\section{Definition}

Let $G$ denote a finite regular graph with 4-valent vertices 
$v_1, \cdots, v_q$. 
We fix a rectangular neighborhood $V_i$ of $v_i$ which is homeomorphic to 
$\{ (x, y) \in {\mathbb{R}}^2 \ | \ -1 \leq x \leq 1, -1 \leq y \leq 1 \}$, 
where $v_i$ corresponds to the origin, and 
edges incident to $v_i$ in $V_i$ correspond to 
the subset 
$\{ (x, y) \in {\mathbb{R}}^2 \ | \ -1 \leq x \leq 1, 
x^2 = y^2 \}$. 
An interval given by 
$\{ (x, 0) \in {\mathbb{R}}^2 \ | \ -1 \leq x \leq 1 \}$ on $V_i$, 
called a {\it marker}, is attached to each $v_i$. 
A {\it marked graph} is a spatial graph with markers 
in ${\mathbb{R}}^3$. 
An {\it orientation} of a marked graph $G$ is a choice of an orientation 
on each edge of $G$ such that 
orientations on four edges near a marked vertex $v_i$ 
are chosen as 
one of the right two figures in Figure \ref{crossing} 
for $i = 1, \cdots, q$. 
A marked graph is {\it orientable} if it admits an orientation. 
An {\it oriented marked graph} is an orientable marked graph 
with a fixed orientation. 
Two oriented marked graphs are {\it equivalent} 
if they are ambient isotopic in ${\mathbb{R}}^3$ 
keeping rectangular neighborhoods of marked vertices and orientations. 
A marked graph can be described by a diagram on ${\mathbb{R}}^2$, 
which is a link diagram with 4-valent vertices 
equipped with markers. 
Given a marked graph diagram $D$, 
let $L_+(D)$ (resp. $L_-(D)$) denote a link diagram obtained from $D$ by 
performing a resolution on a marked vertex $v_i$ 
along the marker 
(resp. along an arc perpendicular to the marker) in $V_i$ 
for $i = 1, \cdots, q$. 
See Figure \ref{vertex}. 
A marked graph diagram $D$ is {\it admissible} 
if both $L_+(D)$ and $L_-(D)$ represent trivial links. 
A marked graph is {\it admissible} 
if it has an admissible diagram.

\begin{figure}
\begin{center}
\includegraphics{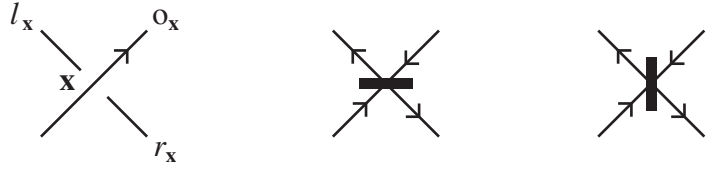}
\caption{crossing (left) and 
marked vertex (center and right)}
\label{crossing}
\end{center}
\end{figure}

Let $p \colon {\mathbb{R}}^4 \to {\mathbb{R}}$ denote 
a projection. 
We denote by ${\mathbb{R}}_t^3$ the hyperplane $p^{-1} (t)$ 
of ${\mathbb{R}}^4$, 
where $t \in {\mathbb{R}}$. 
A {\it surface}-{\it link} is a closed oriented surface 
embedded locally flatly in ${\mathbb{R}}^4$. 
Kawauchi, Shibuya, Suzuki \cite{kawauchi-shibuya-suzuki} showed that 
every surface-link ${\mathcal{L}}$ in ${\mathbb{R}}^4$ can be deformed 
by an ambient isotopy of ${\mathbb{R}}^4$ 
to a surface-link ${\mathcal{L}}'$, 
called a {\it hyperbolic splitting} of ${\mathcal{L}}$, such that the projection 
$p|_{{\mathcal{L}}'} \colon {\mathcal{L}}' \to {\mathbb{R}}$ 
satisfies the following properties: \\ 
$(1)$ 
all critical points of $p|_{{\mathcal{L}}'}$ are non-degenerate, \\ 
$(2)$ 
all the index 0 critical points (minimal points) of $p|_{{\mathcal{L}}'}$ 
lie in ${\mathbb{R}}_{-1}^3$, \\ 
$(3)$ 
all the index 1 critical points (saddle points) of $p|_{{\mathcal{L}}'}$ 
lie in ${\mathbb{R}}_0^3$, and \\ 
$(4)$ 
all the index 2 critical points (maximal points) of $p|_{{\mathcal{L}}'}$ 
lie in ${\mathbb{R}}_1^3$. \\ 
The cross-section ${\mathcal{L}}' \cap {\mathbb{R}}_0^3$ is 
a spatial 4-valent graph in ${\mathbb{R}}_0^3$. 
We assign a marker to each 4-valent vertex (saddle point) so that 
the cross-section ${\mathcal{L}}' \cap {\mathbb{R}}_\varepsilon^3$ 
(resp. ${\mathcal{L}}' \cap {\mathbb{R}}_{-\varepsilon}^3$) 
is obtained from ${\mathcal{L}}' \cap {\mathbb{R}}_0^3$ by performing 
a resolution along the marker 
(resp. along an arc perpendicular to the marker) 
at each vertex, 
where $\varepsilon$ is a small positive number. 
See Figure \ref{vertex}.  
The resulting spatial graph with markers in ${\mathbb{R}}_0^3$ 
is an admissible marked graph. 
A hyperbolic splitting 
${\mathcal{L}}'$ of ${\mathcal{L}}$ 
inherits an orientation from that of ${\mathcal{L}}$. 
We choose an orientation on each edge of 
${\mathcal{L}}' \cap {\mathbb{R}}_0^3$ 
so that it coincides with the induced orientation 
on the boundary of 
${\mathcal{L}}' \cap (\cup_{t = -\infty}^0 {\mathbb{R}}_t^3)$. 
We recall the following two theorems.

\begin{figure}
\begin{center}
\includegraphics{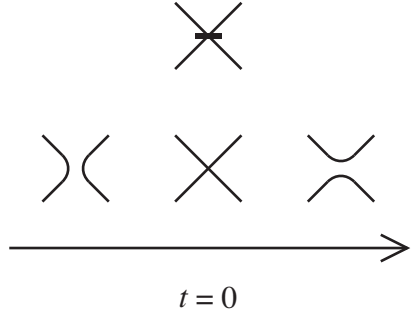}
\caption{assignment of marker}
\label{vertex}
\end{center}
\end{figure}

\begin{theorem} \label{lomonaco-yoshikawa} 
\cite{lomonaco}, \cite{yoshikawa} 
$(1)$ 
Every surface-link in ${\mathbb{R}}^4$ is represented by 
an admissible oriented marked graph in ${\mathbb{R}}^3$. \\ 
$(2)$ 
Every admissible oriented marked graph in ${\mathbb{R}}^3$ 
represents 
a surface-link 
in ${\mathbb{R}}^4$. 
\end{theorem}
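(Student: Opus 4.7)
The plan is to prove the two assertions separately, building directly on the construction already described in the paragraph preceding the statement.

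For (1), I would start from a surface-link $\mathcal{L}$ and apply the Kawauchi--Shibuya--Suzuki theorem to replace $\mathcal{L}$ by an ambient isotopic hyperbolic splitting $\mathcal{L}'$ whose minima, saddles and maxima are confined to $\mathbb{R}^3_{-1}$, $\mathbb{R}^3_{0}$ and $\mathbb{R}^3_{1}$ respectively. The cross-section $\mathcal{L}' \cap \mathbb{R}^3_{0}$ is then a spatial $4$-valent graph $G$ to whose vertices I attach markers recording the direction of each saddle, exactly as prescribed just before the theorem. Orientability is inherited from $\mathcal{L}'$: in local Morse coordinates near each saddle the four incident edges acquire orientations as the boundary of $\mathcal{L}' \cap p^{-1}((-\infty, 0])$, and a direct check in the standard saddle normal form confirms that this is one of the two patterns allowed in the right half of Figure \ref{crossing}. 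Admissibility then follows from the absence of critical values of $p|_{\mathcal{L}'}$ on $(0,1)$: the slice at level $\varepsilon$ representing $L_+(D)$ is ambient isotopic in $\mathbb{R}^3$ to the slice just below $1$, which bounds the disjoint collection of $2$-disks provided by the maximum caps; the symmetric argument below handles $L_-(D)$.

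For (2), I would reverse this procedure. Given an admissible oriented marked graph $G$ with diagram $D$, place $G$ at level $\mathbb{R}^3_{0}$ and, on each rectangular neighbourhood $V_i$, glue in the standard saddle cobordism in $V_i \times [-\varepsilon, \varepsilon]$ whose top slice realises the local picture of $L_+(D)$ and whose bottom slice realises that of $L_-(D)$; away from the vertices take the product of the edges with $[-\varepsilon, \varepsilon]$. Using the product embedding I extend $L_+(D)$ upward to $\mathbb{R}^3_{1-\delta}$, where triviality furnishes disjoint embedded spanning disks in a $3$-ball; I cap these off with one index-$2$ critical point each at level $t = 1$. The symmetric construction below $t = -\varepsilon$ caps $L_-(D)$ off with minima at level $t = -1$. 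The orientation prescribed on $G$ extends uniquely across the saddle cobordism and the capping disks, yielding a closed oriented surface-link whose hyperbolic splitting recovers $G$ on $\mathbb{R}^3_{0}$.

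The main obstacle on the (1) side is the bookkeeping showing that the local marker-and-orientation data at every saddle really does fall into one of the two patterns of Figure \ref{crossing}, which amounts to comparing a saddle Morse normal form with the boundary orientation of the sublevel set. On the (2) side the analogous subtlety is arranging the local saddle cobordisms so that they glue compatibly with the product cobordisms along the edges of $G$ and with the disjoint spanning disks witnessing triviality of $L_\pm(D)$, producing a genuinely locally flat embedded closed surface rather than merely an immersed one.
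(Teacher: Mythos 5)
The paper does not prove this statement: it is recalled from \cite{lomonaco} and \cite{yoshikawa} and stated without proof, so there is no internal argument to compare against. Your outline is the standard one and is essentially correct. For part $(1)$ you are in fact just re-running the construction the paper already carries out in the paragraph preceding the theorem (hyperbolic splitting via \cite{kawauchi-shibuya-suzuki}, markers recording the saddle directions as in Figure \ref{vertex}, edge orientations induced from the boundary of the sublevel set), and your argument for admissibility --- no critical values in $(0,1)$, so the slice at $\varepsilon$ is isotopic to the slice just below $1$, which is a split union of small circles around the maxima --- is the right one; the local check that the induced orientations at a saddle match one of the two patterns of Figure \ref{crossing} is exactly the point to verify. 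For part $(2)$ your reverse construction (product cobordism along edges, standard saddle in each $V_i \times [-\varepsilon,\varepsilon]$, capping $L_\pm(D)$ by disjoint spanning disks pushed to levels $\pm 1$) is the standard proof of realizability.

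One point worth flagging, though it is not a gap in the literal statement: as written, $(2)$ only asserts \emph{existence} of a represented surface-link, but the rest of the paper (in particular Theorem \ref{kearton-kurlin-swenton} and Theorem \ref{main-theorem}) tacitly uses that the surface-link is \emph{well defined} up to ambient isotopy, independently of the chosen systems of spanning disks for $L_+(D)$ and $L_-(D)$. This requires the additional fact that a trivial link in a level $3$-space bounds a system of disjoint locally flat disks in the half-space on one side that is unique up to ambient isotopy rel boundary (a lemma going back to \cite{kawauchi-shibuya-suzuki}). If you intend your sketch to support the way the theorem is used later, you should add this uniqueness step to the part $(2)$ argument.
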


\begin{theorem} \label{kearton-kurlin-swenton} 
\cite{kearton-kurlin}, \cite{swenton}, \cite{kim-joung-lee} 
Let ${\mathcal{L}}_0$ and ${\mathcal{L}}_1$ denote 
surface-links in ${\mathbb{R}}^4$. 
Let $D_0$ and $D_1$ denote diagrams of 
admissible oriented marked graphs 
that represent ${\mathcal{L}}_0$ and ${\mathcal{L}}_1$, respectively. 
Then the followings are equivalent. \\ 
$(1)$ 
${\mathcal{L}}_0$ is 
ambient isotopic to ${\mathcal{L}}_1$ in ${\mathbb{R}}^4$. \\ 
$(2)$ 
$D_1$ is obtained from $D_0$ 
by a finite sequence of 
Yoshikawa moves 
$\Omega_1$, $\Omega_1'$, $\Omega_2$, $\Omega_3$, 
$\Omega_4$, $\Omega_4'$, 
$\Omega_5$, $\Omega_6$, $\Omega_6'$, $\Omega_7$, $\Omega_8$, 
as illustrated in Figures \ref{yoshikawa1figure}, 
\ref{yoshikawa23figure}, 
\ref{yoshikawa4figure}, 
\ref{yoshikawa56figure}, 
\ref{yoshikawa78figure}. 
\end{theorem}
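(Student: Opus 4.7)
The plan is to reduce Theorem \ref{main-theorem} to a move-by-move analysis and then construct an explicit stably tame isomorphism for each Yoshikawa move. By Theorem \ref{kearton-kurlin-swenton}, any two admissible oriented marked graph diagrams representing the same surface-knot $F$ are connected by a finite sequence of Yoshikawa moves $\Omega_1, \Omega_1', \Omega_2, \Omega_3, \Omega_4, \Omega_4', \Omega_5, \Omega_6, \Omega_6', \Omega_7, \Omega_8$. Since stable tame isomorphism is an equivalence relation closed under composition, it suffices to prove that whenever $D$ and $D'$ differ by a single Yoshikawa move, $(CY(D), \partial)$ and $(CY(D'), \partial)$ are stably tame isomorphic. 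This is the work carried out in $\S\S 3$--$11$.

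For each individual move, I would proceed in the standard Chekanov--Ng style. Let $a_1, \dots, a_n$ be the generators of $CY(D)$ supported near the local region where the move takes place, and $a_1', \dots, a_m'$ the corresponding generators for $CY(D')$; the remaining generators are identified in the obvious way. After writing out the local contributions to $\partial$ on both sides, the target is to exhibit either (i) a tame automorphism of $CY(D)$ realising the identification with $CY(D')$ when $m=n$, or (ii) after stabilising the smaller DGA by a pair of generators $e, e'$ with $\partial e = e'$, a tame automorphism matching the two enlarged DGAs. The construction of this isomorphism is then formulaic: one isolates a cancelling pair $(a_i, \partial a_i)$ whose differential begins with a single generator plus higher-order terms, and uses it to make a triangular change of variables that eliminates the extra generators.

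For the classical Reidemeister moves $\Omega_1, \Omega_1', \Omega_2, \Omega_3$, the argument should run closely parallel to Ng's invariance proof \cite{ng1} for combinatorial knot contact homology in ${\mathbb{R}}^3$, since these moves act only on crossings and leave all markers untouched; in particular, the generators attached to marked vertices and their contributions to $\partial$ are inert. The genuinely new input is required for the marker moves $\Omega_4, \Omega_4', \Omega_5, \Omega_6, \Omega_6', \Omega_7, \Omega_8$, and this is where I expect the main obstacle. A marker encodes a saddle, and hence the combinatorics of $\partial$ near a marked vertex is intrinsically more elaborate than near a transverse crossing; consequently the local bookkeeping of arcs, orientations, and the interaction with the resolutions $L_+(D)$ and $L_-(D)$ must all be tracked carefully as the move is performed.

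Moves $\Omega_4$ and $\Omega_4'$, which slide a strand past a marker, are the most delicate, because they change crossing-generators and marker-generators simultaneously, so the tame automorphism must mix the two types in a controlled way; similarly $\Omega_7$ and $\Omega_8$, which rearrange two markers against each other, force a stably tame isomorphism that acts non-trivially on several marker generators at once. My expectation is that each such move can still be resolved by exhibiting one well-chosen cancelling pair and a short triangular substitution, but identifying those pairs and verifying the chain-map property at each marker requires a patient case-by-case analysis based on the local orientation data, which is why the proof is distributed over nine sections rather than carried out uniformly.
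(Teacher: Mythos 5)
Your proposal does not address the statement you were asked to prove. The statement in question is Theorem \ref{kearton-kurlin-swenton}: the equivalence between ambient isotopy of surface-links in ${\mathbb{R}}^4$ and connectivity of their marked graph diagrams by Yoshikawa moves. What you have written is instead an outline of the proof of Theorem \ref{main-theorem} (invariance of the DGA under each Yoshikawa move), and you explicitly \emph{invoke} Theorem \ref{kearton-kurlin-swenton} as an ingredient in your first paragraph. You cannot prove a statement by assuming it and deriving something else; as it stands, nothing in your text bears on why Yoshikawa moves generate ambient isotopy of surface-links.

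For the record, the paper does not prove Theorem \ref{kearton-kurlin-swenton} either: it is recalled as a known result, with the easy direction (each move preserves the represented surface-link) going back to Yoshikawa and the hard direction (sufficiency of the listed moves) due to Kearton--Kurlin, Swenton, and Kim--Joung--Lee. A genuine proof would require the machinery of those papers --- hyperbolic splittings, normal forms for surface-links, and an analysis of how generic isotopies in ${\mathbb{R}}^4$ change the middle cross-section --- none of which appears in your proposal. If the intent was to sketch the proof of Theorem \ref{main-theorem}, your outline is broadly consistent with the paper's strategy in $\S\S 3$--$11$ (destabilizations along crossing--arc pairs followed by explicit tame isomorphisms), but that is answering a different question.
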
 

Let $D$ denote a diagram of an admissible oriented marked graph 
with over/under information at each double point, called a crossing. 
Let 
${\mathbf{1}}, \cdots, {\mathbf{m}}$ 
denote labels on crossings of $D$. 
Connected components of $D$ 
are called {\it arcs}, 
labeled by 
$1, \cdots, n$. 
We notice that a marked vertex is contained in an arc. 
A unital graded algebra $CY(D)$ over ${\mathbb{Z}}$ is 
generated by the group ring ${\mathbb{Z}}[\mu, \mu^{-1}]$ 
in degree 0, 
along with the following generators: \\ 
$\{ a(i, j) \}$ in degree 0, \\ 
$\{ c({\bf x}, i) \}$, $\{ d(i, {\bf x}) \}$ 
in degree 1, \\ 
$\{ e({\bf x}, {\bf y}) \}$, $\{ f({\bf x}) \}$ 
in degree 2, \\ 
where $i \neq j \in \{ 1, \cdots, n \}$, 
${\bf x}, {\bf y} \in \{ {\bf 1}, \cdots,  {\bf m} \}$. 
We set $a(i, i) = 1 + \mu$ for $i \in \{ 1, \cdots, n \}$. 
We suppose 
that generators 
$a(i, j), c({\bf x}, i)$, $d(i, {\bf x})$, $e({\bf x}, {\bf y}), f({\bf x})$ 
do not commute with each other in $CY(D)$, and that 
$\mu$ and $\mu^{-1}$ commute with all generators in $CY(D)$. 

Let $o_{\bf x}$ denote a label assigned on the overarc 
at a crossing with label ${\bf x}$ $({\bf x} \in \{ {\bf 1}, \cdots, {\bf m} \})$, 
and let $\ell_{\bf x}$ (resp. $r_{\bf x}$) denote 
a label assigned on an underarc on the left (resp. right) 
at the crossing 
when we proceed through the crossing 
along the overarc 
following its orientation. 
See the left diagram in Figure \ref{crossing}. 
A differential $\partial$ of $CY(D)$ on generators 
is defined as follows: \\ 
$\partial a(i, j) = 0$, \\ 
$\partial c({\bf x}, i) = 
\mu a(\ell_{\bf x}, i) + a(r_{\bf x}, i) 
- a(\ell_{\bf x}, o_{\bf x}) a(o_{\bf x}, i)$, \\ 
$\partial d(i, {\bf x}) = 
a(i, \ell_{\bf x}) + \mu a(i, r_{\bf x}) 
- a(i, o_{\bf x}) a(o_{\bf x}, \ell_{\bf x})$, \\ 
$\partial e({\bf x}, {\bf y}) = 
(c({\bf x}, \ell_{\bf y}) + \mu c({\bf x}, r_{\bf y}) 
- c({\bf x}, o_{\bf y}) a(o_{\bf y}, \ell_{\bf y}))$ 
$- (\mu d(\ell_{\bf x}, {\bf y}) + d(r_{\bf x}, {\bf y}) 
- a(\ell_{\bf x}, o_{\bf x}) d(o_{\bf x}, {\bf y}))$, \\ 
$\partial f({\bf x}) = 
\mu c({\bf x}, r_{\bf x}) - \mu d(\ell_{\bf x}, {\bf x}) 
+ a(\ell_{\bf x}, o_{\bf x}) d(o_{\bf x}, {\bf x})$, \\ 
where $i \neq j \in \{ 1, \cdots, n \}, {\bf x}, {\bf y} \in \{ {\bf 1}, \cdots,  {\bf m} \}$. 
We extend the differential $\partial$ 
by linearity over ${\mathbb{Z}}$, 
and by the signed Leibniz rule: 
$\partial (v w) = (\partial v) w + (-1)^{\rm{deg} {\it v}} v (\partial w)$, 
where $v, w \in CY(D)$. 
It is straightforward to see that the equation $\partial \circ \partial = 0$ 
holds on generators in $CY(D)$.

An algebra map 
between differential graded algebras 
$\phi \colon 
({\mathbb{Z}} \langle a_1^1, \cdots, a_n^1 \rangle, \partial^1) 
\to 
({\mathbb{Z}} \langle a_1^2, \cdots, a_n^2 \rangle, \partial^2)$ 
is an {\it elementary isomorphism} 
if the followings are satisfied: \\ 
$(1)$ $\phi$ is a graded chain map, \\ 
$(2)$ $\phi(a_i^1) = \alpha a_i^2 + v$ 
for some $i \in \{ 1, \cdots, n \}$, 
where $\alpha, v \in {\mathbb{Z}} \langle a_1^2, \cdots, a_n^2 \rangle$, and 
$\alpha$ is a unit, \\ 
$(3)$ 
$\phi(a_j^1) = a_j^2$ 
for $j \neq i$. \\ 
A {\it tame isomorphism} is a composition of elementary isomorphisms. 
Let $({\mathcal{E}}^i, \partial^i)$ be the tensor algebra 
on two generators $e_1^i, e_2^i$ with 
${\rm deg} (e_1^i) - 1 = {\rm deg} (e_2^i) = i$ 
such that 
the differential is induced by $\partial^i e_1^i = e_2^i$, $\partial^i e_2^i = 0$. 
The degree-$i$ {\it algebraic stabilization} of a differential graded algebra 
$({\mathcal{A}}, \partial)$ is the coproduct of ${\mathcal{A}}$ with 
${\mathcal{E}}^i$, 
with the differential induced from $\partial$ and $\partial^i$. 
The inverse operation of the degree-$i$ algebraic stabilization 
is a degree-$i$ algebraic {\it destabilization}. 
Two differential graded algebras $(A_1, \partial_1)$ and $(A_2, \partial_2)$ 
are {\it stably tame isomorphic} 
if they are tame isomorphic 
after some number of algebraic stabilizations and destabilizations of 
$(A_1, \partial_1)$ and $(A_2, \partial_2)$.

\section{Yoshikawa move $\Omega_1$} 

\begin{figure}
\begin{center}
\includegraphics{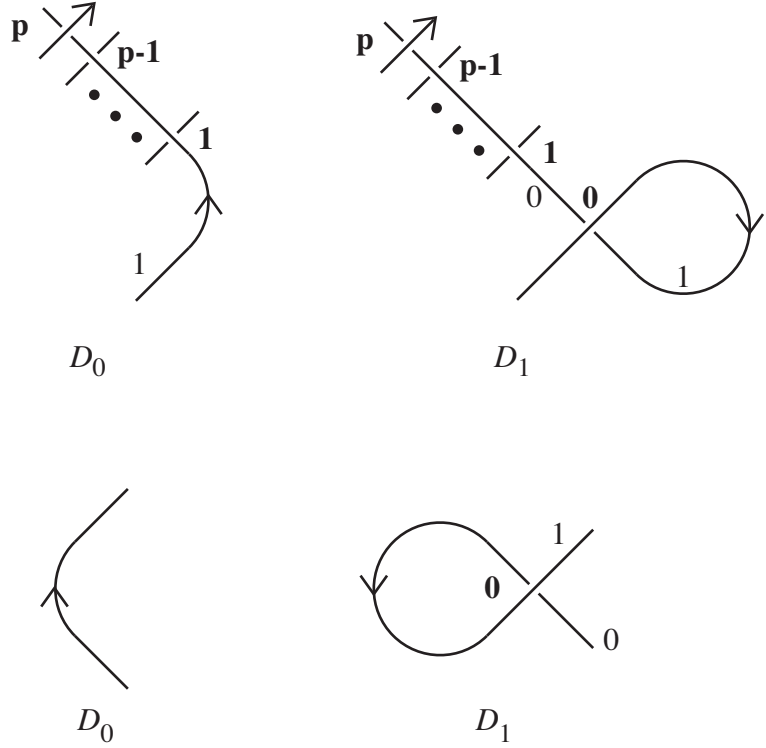}
\caption{Yoshikawa move $\Omega_1$ (upper pair) and 
Yoshikawa move $\Omega_1'$ (lower pair)}
\label{yoshikawa1figure}
\end{center}
\end{figure}

\begin{proposition} \label{yoshikawa1-1} 
Suppose that a diagram $D_1$ of an oriented marked graph $G$ 
is obtained from a diagram $D_0$ of $G$ 
by applying one Yoshikawa move $\Omega_1$, 
as illustrated in Figure \ref{yoshikawa1figure}. 
Then the differential graded algebra $(CY(D_1), \partial)$ is 
stably tame isomorphic to $(CY(D_0), \partial)$. 
\end{proposition}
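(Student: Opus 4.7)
The plan is to mimic Ng's proof of invariance of knot contact homology under the Reidemeister-I move, carried out in this enlarged DGA so that the degree-$2$ generators $e(\mathbf{x}, \mathbf{y})$ and $f(\mathbf{x})$ are also destabilized. Concretely, I will exhibit an explicit tame isomorphism between $(CY(D_1), \partial)$ and a DGA obtained from $(CY(D_0), \partial)$ by a sequence of algebraic stabilizations, and then destabilize.

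Let $\mathbf{x}$ denote the new crossing and $n$ the new arc introduced by $\Omega_1$. In one representative sub-case (the others are parallel), the incident labels are $o_\mathbf{x} = \ell_\mathbf{x} = i$ and $r_\mathbf{x} = n$, where $i$ is a pre-existing arc of $D_0$. Substituting into the definitions and using $a(i, i) = 1 + \mu$, the differentials at $\mathbf{x}$ collapse to
\begin{align*}
\partial c(\mathbf{x}, j) &= a(n, j) - a(i, j) \quad (j \neq n),
&
\partial c(\mathbf{x}, n) &= (1+\mu) - a(i, n),
\\
\partial d(j, \mathbf{x}) &= \mu\bigl(a(j, n) - a(j, i)\bigr) \quad (j \neq i, n),
&
\partial d(i, \mathbf{x}) &= \mu\bigl(a(i, n) - (1+\mu)\bigr),
\end{align*}
together with $\partial d(n, \mathbf{x}) = \mu\bigl((1+\mu) - a(n, i)\bigr)$. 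Introducing new degree-$0$ generators through elementary isomorphisms---$a'(n, j) := a(n, j) - a(i, j)$ for $j \neq i, n$, $a'(n, i) := a(n, i) - (1+\mu)$, $a'(i, n) := a(i, n) - (1+\mu)$, and $a'(j, n) := a(j, n) - a(j, i)$ for $j \neq i, n$---converts the above into $\partial c(\mathbf{x}, j) = a'(n, j)$ for $j \neq n$, $\partial c(\mathbf{x}, n) = -a'(i, n)$, and $\partial d(j, \mathbf{x}) = \mu\,a'(j, n)$ for $j \neq i, n$. Each of these $2n_0$ relations is an acyclic $(0,1)$-pair with unit coefficient, ready to be algebraically destabilized.

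The two residual degree-$1$ generators $d(i, \mathbf{x})$ and $d(n, \mathbf{x})$ become cycles after setting $d'(i, \mathbf{x}) := d(i, \mathbf{x}) + \mu c(\mathbf{x}, n)$ and $d'(n, \mathbf{x}) := d(n, \mathbf{x}) + \mu c(\mathbf{x}, i)$, and a direct computation gives
\[
\partial f(\mathbf{x}) = d'(i, \mathbf{x}),
\qquad
\partial\bigl(e(\mathbf{x}, \mathbf{x}) - f(\mathbf{x})\bigr) = -d'(n, \mathbf{x}),
\]
yielding two further acyclic pairs. For the remaining ``new-arc'' generators $c(\mathbf{y}, n)$ and $d(n, \mathbf{y})$ attached to each old crossing $\mathbf{y} \neq \mathbf{x}$, the differentials $\partial e(\mathbf{y}, \mathbf{x})$ and $\partial e(\mathbf{x}, \mathbf{y})$ have leading terms $\mu c(\mathbf{y}, n)$ and $-d(n, \mathbf{y})$ respectively, so one more round of elementary changes of basis (absorbing the lower-order terms $c(\mathbf{y}, i), d(i, \mathbf{y})$ and the already-cancelled $c(\mathbf{x}, \cdot), d(\cdot, \mathbf{x})$ contributions) makes $(c(\mathbf{y}, n), e(\mathbf{y}, \mathbf{x}))$ and $(d(n, \mathbf{y}), e(\mathbf{x}, \mathbf{y}))$ acyclic $(1,2)$-pairs. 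Iteratively destabilizing every pair then recovers $(CY(D_0), \partial)$.

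The main obstacle is bookkeeping consistency: one must verify at each stage that the change of variables really is an elementary isomorphism (modifying one generator to a unit multiple of another plus a polynomial in the remaining generators), and, in the $(1,2)$ phase, that the absorbing substitutions do not re-introduce generators cancelled in the $(0,1)$ phase through the cross-terms of $\partial e(\mathbf{x}, \mathbf{y})$ and $\partial e(\mathbf{y}, \mathbf{x})$. The identity $\partial^2 = 0$ forces these cross-terms to align, and performing the substitutions in the order above realizes the desired stably tame isomorphism. The three remaining sub-cases of $\Omega_1$ and $\Omega_1'$ are treated identically, with the roles of $\ell_\mathbf{x}, r_\mathbf{x}$ and the signs of the $\mu$-factors adjusted accordingly.
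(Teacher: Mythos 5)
Your proposal is correct and follows essentially the same route as the paper's proof: both organize the extra generators of $CY(D_1)$ into acyclic pairs via explicit tame isomorphisms --- first the degree-$(0,1)$ pairs matching $c(\mathbf{x},\cdot)$, $d(\cdot,\mathbf{x})$ against the $a$-generators of the new arc, then $f(\mathbf{x})$ and $e(\mathbf{x},\mathbf{x})$ against the two residual degree-$1$ generators, then $e(\mathbf{y},\mathbf{x})$, $e(\mathbf{x},\mathbf{y})$ against $c(\mathbf{y},n)$, $d(n,\mathbf{y})$ --- and then destabilize to recover $(CY(D_0),\partial)$. The paper's representative sub-case has $o=r$ at the new crossing rather than your $o=\ell$, and it additionally writes out the crossings at which the new arc becomes the over-strand, but the cancellation scheme and the order of eliminations are the same as yours.
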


\begin{proof} 
We label 
arcs of $D_0$ 
by $1, \cdots, n$, and 
crossings of $D_0$ 
by ${\bf 1}, \cdots, {\bf m}$. 
We suppose that $1$ is the label on the arc of $D_0$ 
involved in the Yoshikawa move. 
See Figure \ref{yoshikawa1figure}. 
Arcs of $D_1$ involved in the move 
are labeled by $0$ and $1$, 
and the crossing of $D_1$ created by the move 
is labeled by ${\bf 0}$. 
Let ${\bf 0}$ and ${\bf p}$ denote labels on the crossings of $D_1$ 
that are the ends of the arc with label $0$. 
Let ${\bf 1}, \cdots, {\bf p-1}$ denote labels on 
the crossings of $D_1$ on the arc with label 0. 
We suppose that 
an arc of $D_1$ with label $i$ $(i \in \{ 2, 3, \cdots, n \})$ 
corresponds to the arc of $D_0$ with label $i$, 
and that a crossing of $D_1$ with label ${\bf j}$ 
$({\bf j} \in \{ {\bf 1}, \cdots, {\bf m} \})$ 
corresponds to the crossing of $D_0$ with label ${\bf j}$. 
Without loss of generality, 
we may assume that the crossing of $D_0$ with label ${\bf p}$ 
is of positive sign. 
The diagram $D_1$ shows that 
the differential of $(CY(D_1), \partial)$ 
on generators 
is described as follows. \\ 
$\partial c({\bf 0}, i) = 
\mu a(0, i) + a(1, i) - a(0, 1) a(1, i)$, 
$\partial d(i, {\bf 0}) = 
a(i, 0) + \mu a(i, 1) - a(i, 1) a(1, 0)$, \\ 
$\partial c({\bf j}, i) = 
\mu a(\ell_{\bf j}, i) + a(r_{\bf j}, i) 
- a(\ell_{\bf j}, o_{\bf j}) a(o_{\bf j}, i)$, 
$\partial d(i, {\bf j}) = 
a(i, \ell_{\bf j}) + \mu a(i, r_{\bf j}) 
- a(i, o_{\bf j}) a(o_{\bf j}, \ell_{\bf j})$, \\ 
$\partial c({\bf p}, i) = 
\mu a(\ell_{\bf p}, i) + a(0, i) 
- a(\ell_{\bf p}, o_{\bf p}) a(o_{\bf p}, i)$, 
$\partial d(i, {\bf p}) = 
a(i, \ell_{\bf p}) + \mu a(i, 0) 
- a(i, o_{\bf p}) a(o_{\bf p}, \ell_{\bf p})$, \\ 
$\partial c({\bf k}, i) = 
\mu a(\ell_{\bf k}, i) + a(r_{\bf k}, i) 
- a(\ell_{\bf k}, 0) a(0, i)$, 
$\partial d(i, {\bf k}) = 
a(i, \ell_{\bf k}) + \mu a(i, r_{\bf k}) 
- a(i, 0) a(0, \ell_{\bf k})$, \\ 
$\partial e({\bf 0}, {\bf 0}) = 
(c({\bf 0}, 0) + \mu c({\bf 0}, 1) - c({\bf 0}, 1) a(1, 0))$ 
$- (\mu d(0, {\bf 0}) + d(1, {\bf 0}) - a(0, 1) d(1, {\bf 0}))$, \\ 
$\partial e({\bf 0}, {\bf j}) = 
(c({\bf 0}, \ell_{\bf j}) + \mu c({\bf 0}, r_{\bf j}) 
- c({\bf 0}, o_{\bf j}) a(o_{\bf j}, \ell_{\bf j}))$ 
$- (\mu d(0, {\bf j}) + d(1, {\bf j}) - a(0, 1) d(1, {\bf j}))$, \\ 
$\partial e({\bf j}, {\bf 0}) = 
(c({\bf j}, 0) + \mu c({\bf j}, 1) - c({\bf j}, 1) a(1, 0))$ 
$- (\mu d(\ell_{\bf j}, {\bf 0}) + d(r_{\bf j}, {\bf 0}) 
- a(\ell_{\bf j}, o_{\bf j}) d(o_{\bf j}, {\bf 0}))$, \\ 
$\partial e({\bf 0}, {\bf p}) = 
(c({\bf 0}, \ell_{\bf p}) + \mu c({\bf 0}, 0) 
- c({\bf 0}, o_{\bf p}) a(o_{\bf p}, \ell_{\bf p}))$ 
$- (\mu d(0, {\bf p}) + d(1, {\bf p}) - a(0, 1) d(1, {\bf p}))$, \\ 
$\partial e({\bf p}, {\bf 0}) = 
(c({\bf p}, 0) + \mu c({\bf p}, 1) - c({\bf p}, 1) a(1, 0))$ 
$- (\mu d(\ell_{\bf p}, {\bf 0}) + d(0, {\bf 0}) 
- a(\ell_{\bf p}, o_{\bf p}) d(o_{\bf p}, {\bf 0}))$, \\ 
$\partial e({\bf 0}, {\bf k}) = 
(c({\bf 0}, \ell_{\bf k}) + \mu c({\bf 0}, r_{\bf k}) 
- c({\bf 0}, 0) a(0, \ell_{\bf k}))$ 
$- (\mu d(0, {\bf k}) + d(1, {\bf k}) 
- a(0, 1) d(1, {\bf k}))$, \\ 
$\partial e({\bf k}, {\bf 0}) = 
(c({\bf k}, 0) + \mu c({\bf k}, 1) 
- c({\bf k}, 1) a(1, 0))$ 
$- (\mu d(\ell_{\bf k}, {\bf 0}) + d(r_{\bf k}, {\bf 0}) 
- a(\ell_{\bf k}, 0) d(0, {\bf 0}))$, \\ 
$\partial e({\bf j1}, {\bf j2}) = 
(c({\bf j1}, \ell_{\bf j2}) + \mu c({\bf j1}, r_{\bf j2}) 
- c({\bf j1}, o_{\bf j2}) a(o_{\bf j2}, \ell_{\bf j2}))$ 
$- (\mu d(\ell_{\bf j1}, {\bf j2}) + d(r_{\bf j1}, {\bf j2}) 
- a(\ell_{\bf j1}, o_{\bf j1}) d(o_{\bf j1}, {\bf j2}))$, \\ 
$\partial e({\bf j}, {\bf p}) = 
(c({\bf j}, \ell_{\bf p}) + \mu c({\bf j}, 0) 
- c({\bf j}, o_{\bf p}) a(o_{\bf p}, \ell_{\bf p}))$ 
$- (\mu d(\ell_{\bf j}, {\bf p}) + d(r_{\bf j}, {\bf p}) 
- a(\ell_{\bf j}, o_{\bf j}) d(o_{\bf j}, {\bf p}))$, \\ 
$\partial e({\bf p}, {\bf j}) = 
(c({\bf p}, \ell_{\bf j}) + \mu c({\bf p}, r_{\bf j}) 
- c({\bf p}, o_{\bf j}) a(o_{\bf j}, \ell_{\bf j}))$ 
$- (\mu d(\ell_{\bf p}, {\bf j}) + d(0, {\bf j}) 
- a(\ell_{\bf p}, o_{\bf p}) d(o_{\bf p}, {\bf j}))$, \\ 
$\partial e({\bf j}, {\bf k}) = 
(c({\bf j}, \ell_{\bf k}) + \mu c({\bf j}, r_{\bf k}) 
- c({\bf j}, 0) a(0, \ell_{\bf k}))$ 
$- (\mu d(\ell_{\bf j}, {\bf k}) + d(r_{\bf j}, {\bf k}) 
- a(\ell_{\bf j}, o_{\bf j}) d(o_{\bf j}, {\bf k}))$, \\ 
$\partial e({\bf k}, {\bf j}) = 
(c({\bf k}, \ell_{\bf j}) + \mu c({\bf k}, r_{\bf j}) 
- c({\bf k}, o_{\bf j}) a(o_{\bf j}, \ell_{\bf j}))$ 
$- (\mu d(\ell_{\bf k}, {\bf j}) + d(r_{\bf k}, {\bf j}) 
- a(\ell_{\bf k}, 0) d(0, {\bf j}))$, \\ 
$\partial e({\bf p}, {\bf p}) = 
(c({\bf p}, \ell_{\bf p}) + \mu c({\bf p}, 0) 
- c({\bf p}, o_{\bf p}) a(o_{\bf p}, \ell_{\bf p}))$ 
$- (\mu d(\ell_{\bf p}, {\bf p}) + d(0, {\bf p}) 
- a(\ell_{\bf p}, o_{\bf p}) d(o_{\bf p}, {\bf p}))$, \\ 
$\partial e({\bf p}, {\bf k}) = 
(c({\bf p}, \ell_{\bf k}) + \mu c({\bf p}, r_{\bf k}) 
- c({\bf p}, 0) a(0, \ell_{\bf k}))$ 
$- (\mu d(\ell_{\bf p}, {\bf k}) + d(0, {\bf k}) 
- a(\ell_{\bf p}, o_{\bf p}) d(o_{\bf p}, {\bf k}))$, \\ 
$\partial e({\bf k}, {\bf p}) = 
(c({\bf k}, \ell_{\bf p}) + \mu c({\bf k}, 0) 
- c({\bf k}, o_{\bf p}) a(o_{\bf p}, \ell_{\bf p}))$ 
$- (\mu d(\ell_{\bf k}, {\bf p}) + d(r_{\bf k}, {\bf p}) 
- a(\ell_{\bf k}, 0) d(0, {\bf p}))$, \\ 
$\partial e({\bf k1}, {\bf k2}) = 
(c({\bf k1}, \ell_{\bf k2}) + \mu c({\bf k1}, r_{\bf k2}) 
- c({\bf k1}, 0) a(0, \ell_{\bf k2}))$ 
$- (\mu d(\ell_{\bf k1}, {\bf k2}) + d(r_{\bf k1}, {\bf k2}) 
- a(\ell_{\bf k1}, 0) d(0, {\bf k2}))$, \\ 
$\partial f({\bf 0}) = 
\mu c({\bf 0}, 1) - \mu d(0, {\bf 0}) 
+ a(0, 1) d(1, {\bf 0})$, 
$\partial f({\bf j}) = 
\mu c({\bf j}, r_{\bf j}) - \mu d(\ell_{\bf j}, {\bf j}) 
+ a(\ell_{\bf j}, o_{\bf j}) d(o_{\bf j}, {\bf j})$, \\ 
$\partial f({\bf p}) = 
\mu c({\bf p}, 0) - \mu d(\ell_{\bf p}, {\bf p}) 
+ a(\ell_{\bf p}, o_{\bf p}) d(o_{\bf p}, {\bf p})$, 
$\partial f({\bf k}) = 
\mu c({\bf k}, r_{\bf k}) - \mu d(\ell_{\bf k}, {\bf k}) 
+ a(\ell_{\bf k}, 0) d(0, {\bf k})$, \\ 
where 
$i \in \{ 0, 1, \cdots, n \}$, 
${\bf j}, {\bf j1}, {\bf j2} \in \{ {\bf 1}, \cdots, {\bf p-1} \}$ and  
${\bf k}, {\bf k1}, {\bf k2} \in \{ {\bf{p+1}}, \cdots, {\bf m} \}$.

We define a tame isomorphism 
$\phi \colon (CY(D_1), \partial) \to (CY^2(D_1), \partial^2)$ 
by \\ 
$\phi(a(0, i)) = 
a^2(0, i) + a^2(1, i) - a^2(0, 1) a^2(1, i)$, 
$\phi(a(i, 0)) = 
\mu a^2(i, 0) + a^2(i, 1) - a^2(i, 1) a^2(1, 0)$, \\ 
$\phi(a(i, j)) = a^2(i, j)$, 
$\phi(c({\bf j}, i)) = c^2({\bf j}, i)$, 
$\phi(d(i, {\bf j})) = d^2(i, {\bf j})$, \\ 
$\phi(e({\bf j}, {\bf k})) = e^2({\bf j}, {\bf k})$, 
$\phi(f({\bf j})) = f^2({\bf j})$, \\ 
where $i \neq j \in \{ 1, \cdots, n \}$ and 
${\bf j}, {\bf k} \in \{ {\bf 0}, {\bf 1}, \cdots, {\bf m} \}$. 
For example, 
from the equation 
$\partial c({\bf 0}, i) = 
\mu a(0, i) + a(1, i) - a(0, 1) a(1, i)$ 
in $(CY(D_1), \partial)$, 
we obtain 
$\partial^2 c^2({\bf 0}, i) = 
\mu (a^2(0, i) + a^2(1, i) - a^2(0, 1) a^2(1, i)) 
+ a^2(1, i) 
- (a^2(0, 1) + a^2(1, 1) - a^2(0, 1) a^2(1, 1)) a^2(1, i)$ 
$= \mu a^2(0, i)$ in $(CY^2(D_1), \partial^2)$, 
where $i \in \{ 1, \cdots, n \}$. 
We notice equations 
$\partial^2 c^2({\bf 0}, i) = \mu a^2(0, i)$,  
$\partial^2 a^2(0, i) = 0$,  
and 
$\partial^2 d^2(i, {\bf 0}) = \mu a^2(i, 0)$, 
$\partial^2 a^2(i, 0) = 0$ 
in $(CY^2(D_1), \partial^2)$, 
where $i \in \{ 1, \cdots, n \}$. 

We eliminate pairs of generators 
$(c^2({\bf 0}, i), a^2(0, i))$, 
$(d^2(i, {\bf 0}), a^2(i, 0))$ 
$(i \in \{ 1, \cdots, n \})$ 
by a sequence of destabilizations 
on $(CY^2(D_1), \partial^2)$, 
and we obtain $(CY^3(D_1), \partial^3)$. 
For example, 
from the equation $\partial^2 c^2({\bf 0}, 0) = 
\mu (1 + \mu) 
+ (\mu a^2(1, 0) + (1 + \mu) - (1 + \mu) a^2(1, 0)) 
- (a^2(0, 1) + (1 + \mu) - a^2(0, 1) (1 + \mu)) 
(\mu a^2(1, 0) + (1 + \mu) - (1 + \mu) a^2(1, 0))$ 
in $(CY^2(D_1), \partial^2)$, 
we obtain 
$\partial^3 c^3({\bf 0}, 0) 
= \mu (1 + \mu) 
+ (1 + \mu) 
- (1 + \mu) (1 + \mu) 
= 0$ in $(CY^3(D_1), \partial^3)$. 
We notice equations 
$\partial^3 f^3({\bf 0}) = - \mu d^3(0, {\bf 0})$, 
$\partial^3 d^3(0, {\bf 0}) = 0$, and 
$\partial^3 e^3({\bf 0}, {\bf 0}) = c^3({\bf 0}, 0) - \mu d^3(0, {\bf 0})$, 
$\partial^3 c^3({\bf 0}, 0) = 0$  
in $(CY^3(D_1), \partial^3)$. 

We eliminate the pair of generators 
$(f^3({\bf 0}), d^3(0, {\bf 0}))$ 
by a destabilization on $(CY^3(D_1), \partial^3)$, 
and we obtain $(CY^4(D_1), \partial^4)$. 
From the equations 
$\partial^3 e^3({\bf 0}, {\bf 0}) = c^3({\bf 0}, 0) - \mu d^3(0, {\bf 0})$ 
and 
$\partial^3 c^3({\bf 0}, 0) = 0$  
in $(CY^3(D_1), \partial^3)$, 
we obtain 
$\partial^4 e^4({\bf 0}, {\bf 0}) = c^4({\bf 0}, 0)$ and 
$\partial^4 c^4({\bf 0}, 0) = 0$ in $(CY^4(D_1), \partial^4)$, 
respectively. 
We eliminate the pair of generators 
$(e^4({\bf 0}, {\bf 0}), c^4({\bf 0}, 0))$ 
by a destabilization on $(CY^4(D_1), \partial^4)$, 
and we obtain $(CY^5(D_1), \partial^5)$. 

We define a tame isomorphism 
$\phi^5 \colon (CY^5(D_1), \partial^5) \to (CY^6(D_1), \partial^6)$ 
by \\ 
$\phi^5 (c^5({\bf j}, 0)) = c^6({\bf j}, 0) + c^6({\bf j}, 1)$, 
$\phi^5 (d^5(0, {\bf j})) = d^6(0, {\bf j}) + d^6(1, {\bf j})$, \\ 
$\phi^5 (a^5(i, j)) = a^6(i, j)$, 
$\phi^5 (c^5({\bf j}, i)) = c^6({\bf j}, i)$, 
$\phi^5 (d^5(i, {\bf j})) = d^6(i, {\bf j})$, \\ 
$\phi^5 (e^5({\bf j}, {\bf 0})) = e^6({\bf j}, {\bf 0})$, 
$\phi^5 (e^5({\bf 0}, {\bf j})) = e^6({\bf 0}, {\bf j})$, 
$\phi^5 (e^5({\bf j}, {\bf k})) = e^6({\bf j}, {\bf k})$, 
$\phi^5 (f^5({\bf j})) = f^6({\bf j})$, \\ 
where $i \neq j \in \{ 1, \cdots, n \}$ and 
${\bf j}, {\bf k} \in \{ {\bf 1}, \cdots, {\bf m} \}$. 
For example, 
from the equation $\partial^5 c^5({\bf j}, 0) = 
\mu a^5(\ell_{\bf j}, 1) + a^5(r_{\bf j}, 1) 
- a^5(\ell_{\bf j}, o_{\bf j}) a^5(o_{\bf j}, 1)$ 
in $(CY^5(D_1), \partial^5)$, 
we obtain 
$\partial^6 (c^6({\bf j}, 0) + c^6({\bf j}, 1)) 
= \mu a^6(\ell_{\bf j}, 1) + a^6(r_{\bf j}, 1) 
- a^6(\ell_{\bf j}, o_{\bf j}) a^6(o_{\bf j}, 1)$ 
in $(CY^6(D_1), \partial^6)$. 
Since we have the equation 
$\partial^6 c^6({\bf j}, 1) 
= \mu a^6(\ell_{\bf j}, 1) + a^6(r_{\bf j}, 1) 
- a^6(\ell_{\bf j}, o_{\bf j}) a^6(o_{\bf j}, 1)$ 
in $(CY^6(D_1), \partial^6)$, 
we obtain 
$\partial^6 c^6({\bf j}, 0) = 0$ in $(CY^6(D_1), \partial^6)$. 
We notice equations 
$\partial^6 e^6({\bf j}, {\bf 0}) = c^6({\bf j}, 0)$, 
$\partial^6 c^6({\bf j}, 0) = 0$, and 
$\partial^6 e^6({\bf 0}, {\bf j}) = - \mu d^6(0, {\bf j})$, 
$\partial^6 d^6(0, {\bf j}) = 0$ 
in $(CY^6(D_1), \partial^6)$, 
where ${\bf j} \in \{ {\bf 1}, \cdots, {\bf m} \}$.

We eliminate pairs of generators 
$(e^6({\bf j}, {\bf 0}), c^6({\bf j}, 0))$ and 
$(e^6({\bf 0}, {\bf j}), d^6(0, {\bf j}))$ 
$({\bf j} \in \{ {\bf 1}, \cdots, {\bf m} \})$ 
by a sequence of destabilizations on $(CY^6(D_1), \partial^6)$, 
and we obtain $(CY^7(D_1), \partial^7)$. 
It is straightforward to see that 
the differential graded algebra $(CY^7(D_1), \partial^7)$ 
is isomorphic to $(CY(D_0), \partial)$. 
This shows that 
$(CY(D_1), \partial)$ is 
stably tame isomorphic to $(CY(D_0), \partial)$. 
\end{proof} 

We say that the series of tame isomorphisms and destabilizations 
performed in the proof of Proposition \ref{yoshikawa1-1} is 
a {\it destabilization along} ${\bf 0} \rightarrow 0$, 
from the crossing with label ${\bf 0}$ to the arc with label $0$, 
on $(CY(D_1), \partial)$.

\begin{proposition} \label{yoshikawa1-2} 
Suppose that a diagram $D_1$ of an oriented marked graph $G$ 
is obtained from a diagram $D_0$ of $G$ 
by applying one Yoshikawa move $\Omega_1'$, 
as illustrated in Figure \ref{yoshikawa1figure}. 
Then $(CY(D_1), \partial)$ is stably tame isomorphic to 
$(CY(D_0), \partial)$. 
\end{proposition}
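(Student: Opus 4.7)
The plan is to mirror the argument of Proposition \ref{yoshikawa1-1} almost verbatim, accommodating the sign/orientation difference between $\Omega_1$ and $\Omega_1'$. I would use the same labeling conventions: $0$ for the new arc and $1$ for the modified arc of $D_1$, ${\bf 0}$ for the new crossing, and ${\bf p}$ for the crossing at the other end of arc $0$ (again chosen positive without loss of generality). The crossing created by $\Omega_1'$ has the opposite handedness from the one created by $\Omega_1$, so the positions of the left and right underarcs at ${\bf 0}$ are exchanged, and the defining formulas $\partial c({\bf 0},i)$, $\partial d(i,{\bf 0})$, $\partial e({\bf 0},*)$, $\partial e(*,{\bf 0})$, $\partial f({\bf 0})$ take parallel but slightly different shapes. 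I would record all these differentials explicitly, in the same exhaustive style used in the preceding proof, so that each subsequent substitution can be checked coefficient by coefficient.

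Next, I would define a tame isomorphism $\phi\colon (CY(D_1),\partial)\to(CY^2(D_1),\partial^2)$ that substitutes for $a(0,i)$ and $a(i,0)$ by solving the relations $\partial c({\bf 0},i)=0$ and $\partial d(i,{\bf 0})=0$ modulo the $\Omega_1'$ conventions. The substitution is arranged so that in $(CY^2(D_1),\partial^2)$ one has $\partial^2 c^2({\bf 0},i)=\pm\mu\, a^2(0,i)$ and $\partial^2 d^2(i,{\bf 0})=\pm\mu\, a^2(i,0)$ (up to signs dictated by the crossing sign), making the pairs $(c^2({\bf 0},i),a^2(0,i))$ and $(d^2(i,{\bf 0}),a^2(i,0))$ acyclic and eliminable by algebraic destabilizations.

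After this first wave of destabilizations, I would inspect the induced differentials on $f^3({\bf 0})$, $d^3(0,{\bf 0})$, $e^3({\bf 0},{\bf 0})$, $c^3({\bf 0},0)$ and destabilize the pairs $(f^3({\bf 0}),d^3(0,{\bf 0}))$ and then $(e^4({\bf 0},{\bf 0}),c^4({\bf 0},0))$ as before. A further tame isomorphism of the form $c^5({\bf j},0)\mapsto c^6({\bf j},0)+c^6({\bf j},1)$ and the dual substitution for $d^5(0,{\bf j})$ would then strip the remaining dependence on $a^6(0,\cdot)$ and $a^6(\cdot,0)$, after which the cross-pairs $(e^6({\bf j},{\bf 0}),c^6({\bf j},0))$ and $(e^6({\bf 0},{\bf j}),d^6(0,{\bf j}))$ can be destabilized. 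The terminology \emph{destabilization along} ${\bf 0}\to 0$ introduced after Proposition \ref{yoshikawa1-1} would apply to the whole procedure.

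The principal nuisance, and the only place where real checking is needed, is confirming that the tame isomorphism $\phi$ is well-defined with the reversed $\Omega_1'$ sign conventions: one must verify that the leading coefficients of $a(0,i)$ in $\partial c({\bf 0},i)$ and of $a(i,0)$ in $\partial d(i,{\bf 0})$ are units in $\mathbb{Z}[\mu,\mu^{-1}]$, so that the substitution is invertible, and that the many formulas $\partial e({\bf 0},{\bf j})$, $\partial e({\bf j},{\bf 0})$, $\partial e({\bf 0},{\bf p})$, $\partial e({\bf p},{\bf 0})$, $\partial e({\bf 0},{\bf k})$, $\partial e({\bf k},{\bf 0})$ transform consistently under the several rounds of substitution and destabilization. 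Once that bookkeeping is carried out, the final algebra $(CY^7(D_1),\partial^7)$ is manifestly isomorphic to $(CY(D_0),\partial)$, establishing the stably tame isomorphism.
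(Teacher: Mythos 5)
Your proposal is correct and follows essentially the same route as the paper: the paper's own proof simply invokes the ``destabilization along ${\bf 0}\to 0$'' procedure named after Proposition \ref{yoshikawa1-1} and notes the result is isomorphic to $(CY(D_0),\partial)$, while you redo that procedure explicitly with the $\Omega_1'$ sign conventions. Your added observation that one must check the coefficient of $a(0,i)$ (resp.\ $a(i,0)$) in $\partial c({\bf 0},i)$ (resp.\ $\partial d(i,{\bf 0})$) is a unit of ${\mathbb{Z}}[\mu,\mu^{-1}]$ under the reversed handedness is exactly the point the paper leaves implicit.
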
 

\begin{proof} 
We label arcs of $D_1$ involved in the Yoshikawa move 
by $0$ and $1$, 
and the crossing of $D_1$ 
created by the move 
by ${\bf 0}$. 
See Figure \ref{yoshikawa1figure}. 
Perform a destabilization along ${\bf 0} \rightarrow 0$ 
on $(CY(D_1), \partial)$, 
and we obtain a differential graded algebra 
that is isomorphic to $(CY(D_0), \partial)$. 
This shows that $(CY(D_1), \partial)$ is stably tame isomorphic to 
$(CY(D_0), \partial)$. 
\end{proof}

\section{Yoshikawa move $\Omega_2$}

\begin{figure}
\begin{center}
\includegraphics{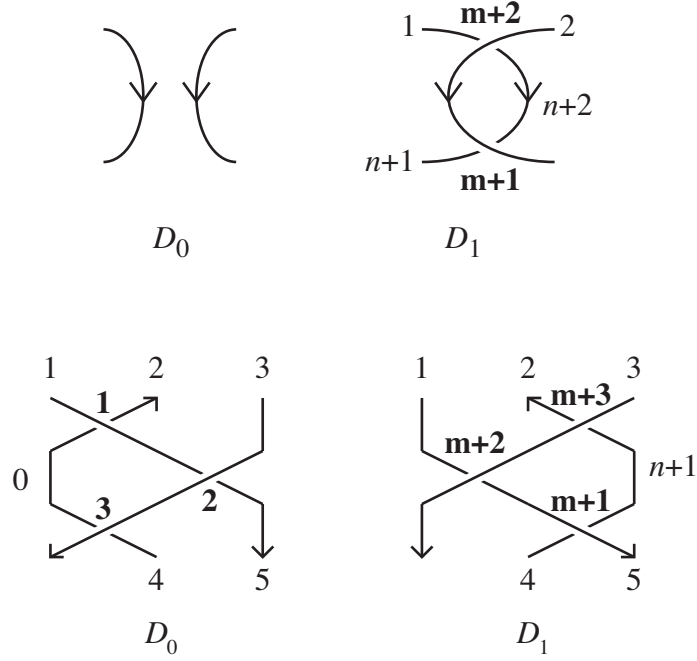}
\caption{Yoshikawa move $\Omega_2$ (upper pair) and 
Yoshikawa move $\Omega_3$ (lower pair)}
\label{yoshikawa23figure}
\end{center}
\end{figure}

\begin{proposition} \label{yoshikawa2} 
Suppose that a diagram $D_1$ of an oriented marked graph $G$ 
is obtained from a diagram $D_0$ of $G$ 
by applying one Yoshikawa move $\Omega_2$, 
as illustrated in Figure \ref{yoshikawa23figure}. 
Then 
$(CY(D_1), \partial)$ is 
stably tame isomorphic to $(CY(D_0), \partial)$. 
\end{proposition}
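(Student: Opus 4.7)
The plan is to reduce this to two successive applications of the ``destabilization along ${\bf 0} \rightarrow 0$'' procedure introduced after the proof of Proposition \ref{yoshikawa1-1}. The Yoshikawa move $\Omega_2$ is the marked-graph analogue of a classical Reidemeister II move: two disjoint parallel arcs in $D_0$ are replaced in $D_1$ by two strands that cross each other twice, bounding a bigon. This creates two new crossings and two new short arcs (the arcs of the bigon), while leaving everything outside a small disk unchanged. I would label arcs of $D_0$ by $1,\ldots,n$ and crossings by ${\bf 1},\ldots,{\bf m}$, and label the new crossings of $D_1$ by ${\bf 0}$ and ${\bf 0'}$ and the new arcs of $D_1$ by $0$ and $0'$, arranged so that $0$ is adjacent to ${\bf 0}$ and $0'$ is adjacent to ${\bf 0'}$ in the same local configuration as in Figure \ref{yoshikawa1figure}.

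The first step is to perform a destabilization along ${\bf 0} \rightarrow 0$ on $(CY(D_1), \partial)$. Since the local picture near ${\bf 0}$ and its adjacent arc $0$ is, by construction, identical to the local picture handled in the $\Omega_1$ proof, the same sequence of tame isomorphisms and algebraic destabilizations applies: one defines a tame isomorphism replacing $a(0,i)$ and $a(i,0)$ by the expressions given in the $\Omega_1$ proof so that $\partial c({\bf 0},i) = \mu a(0,i)$ and $\partial d(i,{\bf 0}) = \mu a(i,0)$; eliminates the pairs $(c({\bf 0},i), a(0,i))$ and $(d(i,{\bf 0}), a(i,0))$; then eliminates $(f({\bf 0}), d(0,{\bf 0}))$ and $(e({\bf 0},{\bf 0}), c({\bf 0},0))$; and finally, after a second tame isomorphism, eliminates the pairs $(e({\bf j},{\bf 0}), c({\bf j},0))$ and $(e({\bf 0},{\bf j}), d(0,{\bf j}))$ for all remaining crossing labels ${\bf j}$.

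After this first destabilization, the resulting differential graded algebra is isomorphic to the one associated to the intermediate diagram obtained from $D_1$ by resolving the crossing ${\bf 0}$; in this intermediate diagram, only the crossing ${\bf 0'}$ and its adjacent arc $0'$ remain from the $\Omega_2$ alteration, and they sit in precisely the $\Omega_1$ local configuration. A second destabilization along ${\bf 0'} \rightarrow 0'$ then eliminates all generators involving ${\bf 0'}$ and $0'$, and the resulting algebra is isomorphic to $(CY(D_0), \partial)$. Combining the two destabilizations gives the stably tame isomorphism asserted by the proposition.

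The main obstacle will be verifying that the first destabilization goes through \emph{in the presence of the second new crossing} ${\bf 0'}$, since generators such as $e({\bf 0},{\bf 0'})$, $e({\bf 0'},{\bf 0})$, $c({\bf 0'}, 0)$, and $d(0, {\bf 0'})$ involve both the data being eliminated and the data being retained. The key point is that these mixed generators appear in the $\Omega_1$ destabilization formulas as instances of the generic cases ``$e({\bf 0},{\bf k})$'', ``$e({\bf k},{\bf 0})$'', etc., with ${\bf k} = {\bf 0'}$ treated simply as another crossing label; no new differential relations arise. Once this compatibility is checked, the rest of the argument is a direct rewriting of the $\Omega_1$ computation, and the second destabilization proceeds identically with the roles of $({\bf 0}, 0)$ and $({\bf 0'}, 0')$ interchanged.
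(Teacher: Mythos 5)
Your proposal is correct and follows essentially the same route as the paper: the author labels the two new arcs and two new crossings created by $\Omega_2$ and performs exactly two successive destabilizations, each along a new crossing toward its adjacent new arc, obtaining an algebra isomorphic to $(CY(D_0),\partial)$. Your additional remark about checking that the first destabilization is compatible with the presence of the second new crossing is a reasonable point of care, but it is subsumed by the generic form of the destabilization procedure set up in the $\Omega_1$ proof.
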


\begin{proof} 
We label arcs of $D_1$ involved in the Yoshikawa move 
by $1$, $n+1$, $n+2$ and $2$, and 
crossings of $D_1$ created by the move 
by ${\bf m+1}$ and ${\bf m+2}$. 
See Figure \ref{yoshikawa23figure}. 
Perform destabilizations on $(CY(D_1), \partial)$ 
along ${\bf m+1} \rightarrow n+1$, 
and 
${\bf m+2} \rightarrow n+2$ in this order, and 
we obtain a differential graded algebra 
that is isomorphic to $(CY(D_0), \partial)$. 
This shows that 
$(CY(D_1), \partial)$ is 
stably tame isomorphic to $(CY(D_0), \partial)$. 
\end{proof}

\section{Yoshikawa move $\Omega_3$}

\begin{proposition} \label{yoshikawa3} 
Suppose that a diagram $D_1$ of an oriented marked graph $G$ 
is obtained from a diagram $D_0$ of $G$ 
by applying one Yoshikawa move $\Omega_3$, 
as illustrated in Figure \ref{yoshikawa23figure}. 
Then 
$(CY(D_0), \partial)$ 
is stably tame isomorphic to $(CY(D_1), \partial)$. 
\end{proposition}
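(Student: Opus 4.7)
The plan follows the pattern of Propositions \ref{yoshikawa1-1}--\ref{yoshikawa2}, with the essential complication that $\Omega_3$ preserves the number of crossings: three triangle crossings appear on each side. Hence no sequence of destabilizations on $(CY(D_1), \partial)$ alone can reduce it to $(CY(D_0), \partial)$; an explicit tame isomorphism is needed, and the destabilization trick is used, if at all, only to line up auxiliary generators.

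\textbf{Setup.} On both sides of the move I label the three triangle crossings $\mathbf{1}, \mathbf{2}, \mathbf{3}$ consistently, and the six arcs on the boundary of the triangle are shared between $D_0$ and $D_1$. The three short interior arcs of $D_0$ I call $\alpha_1, \alpha_2, \alpha_3$, and those of $D_1$ I call $\beta_1, \beta_2, \beta_3$; all other arcs and crossings are common. Without loss of generality I fix one choice of signs and orientations in Figure \ref{yoshikawa23figure}; the remaining variants of $\Omega_3$ are handled by the same recipe, with the substitution polynomials adjusted according to the differential formulas already listed in \S 2.

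\textbf{Construction of the isomorphism.} I define an algebra map $\phi \colon (CY(D_1), \partial) \to (CY(D_0), \partial)$ that is the identity on every generator not involving the three interior arcs $\beta_k$ or the three triangle crossings. For each $a(\beta_k, j)$ and $a(j, \beta_k)$ with $j$ outside the triangle, I set $\phi$ equal to the polynomial in the $a$-generators of $D_0$ obtained by reading off the corresponding $\partial c(\mathbf{k}, j)$ or $\partial d(j, \mathbf{k})$ relation on the $D_0$ side and solving for the $\alpha_k$-variable. This is the direct analogue of Ng's braid-word substitution in his proof of Reidemeister~III invariance for combinatorial knot contact homology \cite{ng1}, \cite{ng2}. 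The values of $\phi$ on $c(\mathbf{k}, -), d(-, \mathbf{k}), e(\mathbf{k}, \mathbf{k}'), f(\mathbf{k})$ for triangle crossings $\mathbf{k}, \mathbf{k}'$ are then forced by the requirement that $\phi$ be a chain map; concretely they are combinations of the corresponding $c, d, e, f$ generators of $D_0$ with coefficients in the $a$- and $\mu$-generators.

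\textbf{Verification and main obstacle.} Two things remain: (i) $\phi$ decomposes as a composition of elementary isomorphisms, which follows by ordering the substitutions $\beta_k \mapsto \text{(polynomial in } D_0\text{-generators)}$ so that each substitution has the form $\phi(g) = \alpha g' + v$ with $\alpha$ a unit (here $\alpha$ is $1$ or $\mu$) and only touches one generator at a time; and (ii) $\phi \circ \partial = \partial \circ \phi$ on every generator. The check on degree-$0$ and degree-$1$ generators reduces to a crossing-by-crossing verification identical in flavour to the local computations carried out in the proof of Proposition \ref{yoshikawa1-1}. The principal obstacle, and the bulk of the work, is the identity on $e(\mathbf{x}, \mathbf{y})$ and $f(\mathbf{x})$ when $\mathbf{x}, \mathbf{y} \in \{\mathbf{1}, \mathbf{2}, \mathbf{3}\}$: the differentials mix $\mu$, $a$, $c$, $d$ generators multiplicatively, and the triangle-local product identities that emerge are precisely the $\partial\circ\partial = 0$ relations already built into the three-crossing braid-word DGA; once these are invoked, each subcase collapses. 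The remaining orientation and over/under variants of $\Omega_3$ are then disposed of by inspection of Figure \ref{yoshikawa23figure}, substituting the appropriate local differentials from \S 2 into the same scheme.
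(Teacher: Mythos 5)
Your overall strategy---an explicit tame isomorphism in the spirit of Ng's Reidemeister~III argument---is in the right family, but the proposal has a genuine gap and also misdescribes the combinatorics. First, the setup: in this paper's conventions an arc is a connected component of the diagram, cut at undercrossings, so the $\Omega_3$ triangle has exactly \emph{one} short interior arc on each side (the segment of the lowest strand between its two undercrossings), not three; the paper labels it $0$ in $D_0$ and $n+1$ in $D_1$. Your substitution scheme indexed by three interior arcs $\beta_1,\beta_2,\beta_3$ does not match the actual generator sets. Second, and contrary to your opening claim that destabilization plays at most an auxiliary role, the paper's proof leans on it essentially: it first performs a destabilization along $\mathbf{1}\to 0$ on $(CY(D_0),\partial)$ and along $\mathbf{m+3}\to n+1$ on $(CY(D_1),\partial)$, eliminating the interior arcs together with one crossing's worth of $c,d,e,f$ generators on each side, and only then compares the two reduced algebras. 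Working with the full, undestabilized algebras as you propose is possible in principle (the generator counts match) but considerably harder, and you give no reason to prefer it.

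The more serious gap is in the step you describe as ``forced by the requirement that $\phi$ be a chain map.'' Prescribing $\phi$ on the degree-$0$ generators does not automatically produce chain-map values on the degree-$1$ and degree-$2$ generators: one must show that each element $\phi(\partial g)$ is a boundary in the target and then exhibit a primitive of the correct triangular form $\alpha g' + v$ with $\alpha$ a unit. Exhibiting these primitives is the entire content of the proposition, and in the paper it occupies the whole proof: a chain of five tame isomorphisms $\varphi^0,\varphi^9,\varphi^8,\varphi^7,\varphi^6$ whose nontrivial entries are the images of $c(\mathbf{3},\cdot)$, $d(\cdot,\mathbf{3})$, $e(\mathbf{j},\mathbf{3})$, $e(\mathbf{3},\mathbf{j})$, $e(\mathbf{3},\mathbf{3})$ and $f(\mathbf{3})$, each carrying explicit quadratic correction terms such as products $d(2,\mathbf{2})\,d(5,\mathbf{j})$ and $c(\mathbf{j},5)\,c(\mathbf{2},2)$ that are not visible from the $\partial\circ\partial=0$ relations alone. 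Invoking ``the relations built into the three-crossing braid-word DGA'' does not supply these formulas or prove they exist, so as written the proposal asserts rather than proves the key step. To repair it you would need either to write out the images of all triangle-adjacent generators and verify $\phi\partial=\partial\phi$ degree by degree, or to adopt the paper's route of destabilizing both sides along the interior arcs first, which at least shortens the list of generators on which nontrivial formulas are required.
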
 

\begin{proof} 
We label arcs of $D_0$ 
by $1, \cdots, n$, and 
crossings of $D_0$ 
by ${\bf 1}, \cdots, {\bf m}$. 
Let 
$0, 1, 2, 3, 4, 5$ (resp. ${\bf 1}, {\bf 2}, {\bf 3}$) 
denote 
labels on arcs (resp. crossings) of $D_0$ 
involved in the Yoshikawa move. 
See Figure \ref{yoshikawa23figure}. 
We label arcs (resp. crossings) of $D_1$ 
involved in the move 
by 
$1, 2, 3, 4, 5, n+1$ (resp. ${\bf m+1}, {\bf m+2}, {\bf m+3}$). 
We 
label arcs and crossings of $D_1$ so that 
an arc of $D_1$ with label $i$ $(i \in \{ 1, \cdots, n \})$ 
corresponds to the arc of $D_0$ with label $i$, and that 
a crossing of $D_1$ with label ${\bf j}$ 
$({\bf j} \in \{ {\bf 4}, {\bf 5}, \cdots, {\bf m} \})$ 
corresponds to the crossing of $D_0$ with label ${\bf j}$.

\subsection{Destabilization on $(CY(D_0), \partial)$} 

Perform a destabilization on $(CY(D_0), \partial)$ 
along ${\bf 1} \to 0$, and 
we obtain 
$(CY^0(D_0), \partial^0)$. 
The differential of $(CY^0(D_0), \partial^0)$ 
on generators 
is described as follows. \\ 
$\partial^0 c^0({\bf 2}, i) = 
\mu a^0(5, i) + a^0(1, i) - a^0(5, 3) a^0(3, i)$, \\ 
$\partial^0 d^0(i, {\bf 2}) = 
a^0(i, 5) + \mu a^0(i, 1) - a^0(i, 3) a^0(3, 5)$, \\ 
$\partial^0 c^0({\bf 3}, i) = 
\mu a^0(4, i) - \mu a^0(2, i) + a^0(2, 1) a^0(1, i) - a^0(4, 3) a^0(3, i)$, \\ 
$\partial^0 d^0(i, {\bf 3}) = 
a^0(i, 4) - a^0(i, 2) + a^0(i, 1) a^0(1, 2) - a^0(i, 3) a^0(3, 4)$, \\ 
$\partial^0 c^0({\bf j}, i) = \mu a^0(\ell_{\bf j}, i) + a^0(r_{\bf j}, i) 
- a^0(\ell_{\bf j}, o_{\bf j}) a^0(o_{\bf j}, i)$, \\ 
$\partial^0 d^0(i, {\bf j}) = a^0(i, \ell_{\bf j}) + \mu a^0(i, r_{\bf j}) 
- a^0(i, o_{\bf j}) a^0(o_{\bf j}, \ell_{\bf j})$, \\ 
$\partial^0 e^0({\bf j1}, {\bf j2}) = 
(c^0({\bf j1}, \ell_{\bf j2}) + \mu c^0({\bf j1}, r_{\bf j2}) 
- c^0({\bf j1}, o_{\bf j2}) a^0(o_{\bf j2}, \ell_{\bf j2}))$ \\ 
$- (\mu d^0(\ell_{\bf j1}, {\bf j2}) + d^0(r_{\bf j1}, {\bf j2}) 
- a^0(\ell_{\bf j1}, o_{\bf j1}) d^0(o_{\bf j1}, {\bf j2}))$, \\ 
$\partial^0 e^0({\bf j}, {\bf 2}) = 
(c^0({\bf j}, 5) + \mu c^0({\bf j}, 1) - c^0({\bf j}, 3) a^0(3, 5))$ 
$- (\mu d^0(\ell_{\bf j}, {\bf 2}) + d^0(r_{\bf j}, {\bf 2}) 
- a^0(\ell_{\bf j}, o_{\bf j}) d^0(o_{\bf j}, {\bf 2}))$, \\ 
$\partial^0 e^0({\bf 2}, {\bf j}) = 
(c^0({\bf 2}, \ell_{\bf j}) + \mu c^0({\bf 2}, r_{\bf j}) 
- c^0({\bf 2}, o_{\bf j}) a^0(o_{\bf j}, \ell_{\bf j}))$ 
$- (\mu d^0(5, {\bf j}) + d^0(1, {\bf j}) - a^0(5, 3) d^0(3, {\bf j}))$, \\ 
$\partial^0 e^0({\bf j}, {\bf 3}) = 
(c^0({\bf j}, 4) + c^0({\bf j}, 0) - c^0({\bf j}, 2) + c^0({\bf j}, 1) a^0(1, 2) - c^0({\bf j}, 3) a^0(3, 4))$ \\ 
$- (\mu d^0(\ell_{\bf j}, {\bf 3}) + d^0(r_{\bf j}, {\bf 3}) 
- a^0(\ell_{\bf j}, o_{\bf j}) d^0(o_{\bf j}, {\bf 3}))$, \\ 
$\partial^0 e^0({\bf 3}, {\bf j}) = 
(c^0({\bf 3}, \ell_{\bf j}) + \mu c^0({\bf 3}, r_{\bf j}) 
- c^0({\bf 3}, o_{\bf j}) a^0(o_{\bf j}, \ell_{\bf j}))$ \\ 
$- (\mu d^0(4, {\bf j}) + d^0(0, {\bf j}) - \mu d^0(2, {\bf j}) + a^0(2, 1) d^0(1, {\bf j}) - a^0(4, 3) d^0(3, {\bf j}))$, \\ 
$\partial^0 e^0({\bf 2}, {\bf 2}) = 
(c^0({\bf 2}, 5) + \mu c^0({\bf 2}, 1) - c^0({\bf 2}, 3) a^0(3, 5))$ 
$- (\mu d^0(5, {\bf 2}) + d^0(1, {\bf 2}) - a^0(5, 3) d^0(3, {\bf 2}))$, \\ 
$\partial^0 e^0({\bf 2}, {\bf 3}) = 
(c^0({\bf 2}, 4) - c^0({\bf 2}, 2) + c^0({\bf 2}, 1) a^0(1, 2) - c^0({\bf 2}, 3) a^0(3, 4))$ \\ 
$- (\mu d^0(5, {\bf 3}) + d^0(1, {\bf 3}) - a^0(5, 3) d^0(3, {\bf 3}))$, \\ 
$\partial^0 e^0({\bf 3}, {\bf 2}) = 
(c^0({\bf 3}, 5) + \mu c^0({\bf 3}, 1) - c^0({\bf 3}, 3) a^0(3, 5))$ \\ 
$- (\mu d^0(4, {\bf 2}) - \mu d^0(2, {\bf 2}) + a^0(2, 1) d^0(1, {\bf 2}) - a^0(4, 3) d^0(3, {\bf 2}))$, \\ 
$\partial^0 e^0({\bf 3}, {\bf 3}) = 
(c^0({\bf 3}, 4) - c^0({\bf 3}, 2) + c^0({\bf 3}, 1) a^0(1, 2) - c^0({\bf 3}, 3) a^0(3, 4))$ \\ 
$- (\mu d^0(4, {\bf 3}) - \mu d^0(2, {\bf 3}) + a^0(2, 1) d^0(1, {\bf 3}) - a^0(4, 3) d^0(3, {\bf 3}))$, \\ 
$\partial^0 f^0({\bf j}) = 
\mu c^0({\bf j}, r_{\bf j}) - \mu d^0(\ell_{\bf j}, {\bf j}) 
+ a^0(\ell_{\bf j}, o_{\bf j}) d^0(o_{\bf j}, {\bf j})$, \\ 
$\partial^0 f^0({\bf 2}) = 
\mu c^0({\bf 2}, 1) - \mu d^0(5, {\bf 2}) + a^0(5, 3) d^0(3, {\bf 2})$, \\ 
$\partial^0 f^0({\bf 3}) = 
- c^0({\bf 3}, 2) + c^0({\bf 3}, 1) a^0(1, 2) - \mu d^0(4, {\bf 3}) 
+ a^0(4, 3) d^0(3, {\bf 3})$, \\ 
where $i \in \{ 1, \cdots, n \}$, 
${\bf j}, {\bf j1}, {\bf j2} \in \{ {\bf 4}, {\bf 5}, \cdots, {\bf m} \}$.

\subsection{Destabilization on $(CY(D_1), \partial)$}

Perform a destabilization on $(CY(D_1), \partial)$ 
along ${\bf m+3} \to n+1$, and 
we obtain 
$(CY^1(D_1), \partial^1)$. 
The differential of $(CY^1(D_1), \partial^1)$ on generators 
is described as follows. \\ 
$\partial^1 c^1({\bf j}, i) = 
\mu a^1(\ell_{\bf j}, i) + a^1(r_{\bf j}, i) 
- a^1(\ell_{\bf j}, o_{\bf j}) a^1(o_{\bf j}, i)$, \\ 
$\partial^1 d^1(i, {\bf j}) = 
a^1(i, \ell_{\bf j}) + \mu a^1(i, r_{\bf j}) 
- a^1(i, o_{\bf j}) a^1(o_{\bf j}, \ell_{\bf j})$, \\ 
$\partial^1 c^1({\bf m+1}, i) = 
- a^1(2, i) + a^1(4, i) + \mu^{-1} a^1(2, 5) a^1(5, i) 
+ a^1(2, 3) (a^1(3, i) - \mu^{-1} a^1(3, 5) a^1(5, i))$, \\ 
$\partial^1 d^1(i, {\bf m+1}) = 
- \mu a^1(i, 2) + a^1(i, 3) a^1(3, 2) 
+ \mu a^1(i, 4) - a^1(i, 5) (- \mu a^1(5, 2) + a^1(5, 3) a^1(3, 2))$, \\ 
$\partial^1 c^1({\bf m+2}, i) = 
\mu a^1(5, i) + a^1(1, i) - a^1(5, 3) a^1(3, i)$, \\ 
$\partial^1 d^1(i, {\bf m+2}) = 
a^1(i, 5) + \mu a^1(i, 1) - a^1(i, 3) a^1(3, 5)$, \\ 
$\partial^1 e^1({\bf j1}, {\bf j2}) = 
(c^1({\bf j1}, \ell_{\bf j2}) + \mu c^1({\bf j1}, r_{\bf j2}) 
- c^1({\bf j1}, o_{\bf j2}) a^1(o_{\bf j2}, \ell_{\bf j2}))$ \\ 
$- (\mu d^1(\ell_{\bf j1}, {\bf j2}) + d^1(r_{\bf j1}, {\bf j2}) 
- a^1(\ell_{\bf j1}, o_{\bf j1}) d^1(o_{\bf j1}, {\bf j2}))$, \\ 
$\partial^1 e^1({\bf j}, {\bf m+1}) = 
(- \mu c^1({\bf j}, 2) + c^1({\bf j}, 3) a^1(3, 2) + \mu c^1({\bf j}, 4) 
- c^1({\bf j}, 5) (- \mu a^1(5, 2) + a^1(5, 3) a^1(3, 2)))$ \\ 
$- (\mu d^1(\ell_{\bf j}, {\bf m+1}) + d^1(r_{\bf j}, {\bf m+1}) 
- a^1(\ell_{\bf j}, o_{\bf j}) d^1(o_{\bf j}, {\bf m+1}))$, \\ 
$\partial^1 e^1({\bf m+1}, {\bf j}) = 
(c^1({\bf m+1}, \ell_{\bf j}) + \mu c^1({\bf m+1}, r_{\bf j}) 
- c^1({\bf m+1}, o_{\bf j}) a^1(o_{\bf j}, \ell_{\bf j}))$ \\ 
$- (- d^1(2, {\bf j}) + a^1(2, 3) d^1(3, {\bf j}) + d^1(4, {\bf j}) 
- (\mu^{-1} (- a^1(2, 5) + a^1(2, 3) a^1(3, 5))) d^1(5, {\bf j}))$, \\ 
$\partial^1 e^1({\bf j}, {\bf m+2}) = 
(c^1({\bf j}, 5) + \mu c^1({\bf j}, 1) - c^1({\bf j}, 3) a^1(3, 5))$ \\ 
$- (\mu d^1(\ell_{\bf j}, {\bf m+2}) + d^1(r_{\bf j}, {\bf m+2}) 
- a^1(\ell_{\bf j}, o_{\bf j}) d^1(o_{\bf j}, {\bf m+2}))$, \\ 
$\partial^1 e^1({\bf m+2}, {\bf j}) = 
(c^1({\bf m+2}, \ell_{\bf j}) + \mu c^1({\bf m+2}, r_{\bf j}) 
- c^1({\bf m+2}, o_{\bf j}) a^1(o_{\bf j}, \ell_{\bf j}))$ \\ 
$- (\mu d^1(5, {\bf j}) + d^1(1, {\bf j}) - a^1(5, 3) d^1(3, {\bf j}))$, \\ 
$\partial^1 e^1({\bf m+1}, {\bf m+1}) = 
(- \mu c^1({\bf m+1}, 2) + c^1({\bf m+1}, 3) a^1(3, 2)$ \\ 
$+ \mu c^1({\bf m+1}, 4) 
- c^1({\bf m+1}, 5) (- \mu a^1(5, 2) + a^1(5, 3) a^1(3, 2)))$ \\ 
$- (- d^1(2, {\bf m+1}) + a^1(2, 3) d^1(3, {\bf m+1}) + d^1(4, {\bf m+1})$ \\ 
$- (\mu^{-1} (- a^1(2, 5) + a^1(2, 3) a^1(3, 5))) d^1(5, {\bf m+1}))$, \\ 
$\partial^1 e^1({\bf m+1}, {\bf m+2}) = 
(c^1({\bf m+1}, 5) + \mu c^1({\bf m+1}, 1) - c^1({\bf m+1}, 3) a^1(3, 5))$ \\ 
$- (- (d^1(2, {\bf m+2}) - a^1(2, 3) d^1(3, {\bf m+2})) + d^1(4, {\bf m+2})$ \\ 
$- (\mu^{-1} (- a^1(2, 5) + a^1(2, 3) a^1(3, 5))) d^1(5, {\bf m+2}))$, \\ 
$\partial^1 e^1({\bf m+2}, {\bf m+1}) = 
(- \mu c^1({\bf m+2}, 2) + c^1({\bf m+2}, 3) a^1(3, 2) + \mu c^1({\bf m+2}, 4)$ \\ 
$- c^1({\bf m+2}, 5) (- \mu a^1(5, 2) + a^1(5, 3) a^1(3, 2)))$ \\ 
$- (\mu d^1(5, {\bf m+1}) + d^1(1, {\bf m+1}) - a^1(5, 3) d^1(3, {\bf m+1}))$, \\ 
$\partial^1 e^1({\bf m+2}, {\bf m+2}) = 
(c^1({\bf m+2}, 5) + \mu c^1({\bf m+2}, 1) - c^1({\bf m+2}, 3) a^1(3, 5))$ \\ 
$- (\mu d^1(5, {\bf m+2}) + d^1(1, {\bf m+2}) - a^1(5, 3) d^1(3, {\bf m+2}))$, \\ 
$\partial^1 f^1({\bf j}) = 
\mu c^1({\bf j}, r_{\bf j}) - \mu d^1(\ell_{\bf j}, {\bf j}) 
+ a^1(\ell_{\bf j}, o_{\bf j}) d^1(o_{\bf j}, {\bf j})$ \\ 
$\partial^1 f^1({\bf m+1}) = 
\mu c^1({\bf m+1}, 4) - \mu (- \mu^{-1} d^1(2, {\bf m+1}) 
+ \mu^{-1} a^1(2, 3) d^1(3, {\bf m+1}))$ \\ 
$+ (\mu^{-1} (- a^1(2, 5) + a^1(2, 3) a^1(3, 5))) d^1(5, {\bf m+1})$, \\ 
$\partial^1 f^1({\bf m+2}) = 
\mu c^1({\bf m+2}, 1) - \mu d^1(5, {\bf m+2}) + a^1(5, 3) d^1(3, {\bf m+2})$, \\ 
where $i \in \{ 1, \cdots, n \}$, 
${\bf j}, {\bf j1}, {\bf j2} \in \{ {\bf 4}, {\bf 5}, \cdots, {\bf m} \}$.

\subsection{Tame isomorphism 
between $(CY^0(D_0), \partial^0)$ and $(CY^1(D_1), \partial^1)$} 

In this subsection, 
we construct a tame isomorphism 
from $(CY^0(D_0), \partial^0)$ to $(CY^1(D_1), \partial^1)$.

First we define a tame isomorphism 
$\varphi^0 \colon (CY^0(D_0), \partial^0) \to (CY^9(D_0), \partial^9)$ 
by \\
$\varphi^0 (f^0({\bf 3})) = 
- f^9({\bf 3}) 
+ e^9({\bf 3}, {\bf 3}) 
- a^9(2, 1) e^9({\bf 2}, {\bf 3}) 
+ \mu^{-1} a^9(2, 1) f^9({\bf 2}) a^9(1, 2)$ \\ 
$+ \mu^{-1} d^9(2, {\bf 2}) 
(a^9(5, 3) d^9(3, {\bf 3}) - c^9({\bf 2}, 2) - \mu d^9(5, {\bf 3}) 
- \mu^{-1} a^9(5, 3) d^9(3, {\bf 2}) a^9(1, 2) + d^9(5, {\bf 2}) a^9(1, 2))$, \\ 
$\varphi^0 (a^0(i, j)) = a^9(i, j)$, 
$\varphi^0 (c^0({\bf k}, i)) = c^9({\bf k}, i)$, 
$\varphi^0 (d^0(i, {\bf k})) = d^9(i, {\bf k})$, \\ 
$\varphi^0 (e^0({\bf k1}, {\bf k2})) = e^9({\bf k1}, {\bf k2})$, 
$\varphi^0 (f^0({\bf j})) = f^9({\bf j})$, \\ 
where 
$i \neq j \in \{ 1, \cdots, n \}$, 
${\bf j} \in \{ {\bf 2} \} \cup \{ {\bf 4}, \cdots, {\bf m} \}$, 
${\bf k}, {\bf k1}, {\bf k2} \in \{ {\bf 2}, \cdots, {\bf m} \}$.

Next we define a tame isomorphism 
$\varphi^9 \colon (CY^9(D_0), \partial^9) \to (CY^8(D_0), \partial^8)$ 
by \\ 
$\varphi^9 (e^9({\bf 3}, {\bf 3})) 
= e^8({\bf 3}, {\bf 3}) 
- a^8(2, 1) e^8({\bf 2}, {\bf 3}) 
- \mu^{-1} e^8({\bf 3}, {\bf 2}) a^8(1, 2)$ 
$+ \mu^{-1} a^8(2, 1) e^8({\bf 2}, {\bf 2}) a^8(1, 2)$ \\ 
$- (\mu^{-1} c^8({\bf 3}, 5) + d^8(2, {\bf 2}) 
- \mu^{-1} c^8({\bf 3}, 3) a^8(3, 5)) 
c^8({\bf 2}, 2)$ \\ 
$- \mu^{-1} d^8(2, {\bf 2}) 
(\mu d^8(5, {\bf 3}) + c^8({\bf 2}, 2) - a^8(5, 3) d^8(3, {\bf 3}))$ \\ 
$+ \mu^{-1} a^8(2, 1) 
(c^8({\bf 2}, 5) - c^8({\bf 2}, 3) a^8(3, 5)) c^8({\bf 2}, 2)$ \\ 
$+ \mu^{-1} d^8(2, {\bf 2}) 
(d^8(5, {\bf 2}) - \mu^{-1} a^8(5, 3) d^8(3, {\bf 2})) a^8(1, 2)$, \\ 
$\varphi^9 (a^9(i, j)) = a^8(i, j)$, 
$\varphi^9 (c^9({\bf k}, i)) = c^8({\bf k}, i)$, 
$\varphi^9 (d^9(i, {\bf k})) = d^8(i, {\bf k})$, \\ 
$\varphi^9 (e^9({\bf k1}, {\bf k2})) = e^8({\bf k1}, {\bf k2})$, 
$\varphi^9 (f^9({\bf k})) = f^8({\bf k})$, \\ 
where 
$i \neq j \in \{ 1, \cdots, n \}$, 
${\bf k}, {\bf k1}, {\bf k2} \in \{ {\bf 2}, \cdots, {\bf m} \}$, 
and $({\bf k1}, {\bf k2}) \neq ({\bf 3}, {\bf 3})$.

Next we define a tame isomorphism 
$\varphi^8 \colon (CY^8(D_0), \partial^8) \to (CY^7(D_0), \partial^7)$ 
by \\ 
$\varphi^8 (e^8({\bf j}, {\bf 3})) 
= \mu e^7({\bf j}, {\bf 3}) 
- e^7({\bf j}, {\bf 2}) a^7(1, 2) 
- c^7({\bf j}, 5) c^7({\bf 2}, 2)$ 
$+ c^7({\bf j}, 3) (d^7(3, {\bf 3}) - \mu^{-1} d^7(3, {\bf 2}) a^7(1, 2))$, \\ 
$\varphi^8 (e^8({\bf 3}, {\bf j})) 
= \mu^{-1} (e^7({\bf 3}, {\bf j}) 
- a^7(2, 1) e^7({\bf 2}, {\bf j}) 
- d^7(2, {\bf 2}) d^7(5, {\bf j})$ 
$- (a^7(2, 1) c^7({\bf 2}, 3) - c^7({\bf 3}, 3)) d^7(3, {\bf j}))$, \\ 
$\varphi^8 (a^8(i, j)) = a^7(i, j)$, 
$\varphi^8 (c^8({\bf k}, i)) = c^7({\bf k}, i)$, 
$\varphi^8 (d^8(i, {\bf k})) = d^7(i, {\bf k})$, \\ 
$\varphi^8 (e^8({\bf j1}, {\bf j2})) = e^7({\bf j1}, {\bf j2})$, 
$\varphi^8 (e^8({\bf 3}, {\bf 3})) = e^7({\bf 3}, {\bf 3})$, 
$\varphi^8 (f^8({\bf k})) = f^7({\bf k})$, \\ 
where 
$i \neq j \in \{ 1, \cdots, n \}$, 
${\bf k} \in \{ {\bf 2}, \cdots, {\bf m} \}$, 
${\bf j}, {\bf j1}, {\bf j2} \in \{ {\bf 2} \} \cup \{ {\bf 4}, \cdots, {\bf m} \}$. 

Next we define a tame isomorphism 
$\varphi^7 \colon (CY^7(D_0), \partial^7) \to (CY^6(D_0), \partial^6)$ 
by \\ 
$\varphi^7 (c^7({\bf 3}, 3)) 
= - c^6({\bf 3}, 3) 
+ \mu^{-1} d^6(2, {\bf 2}) a^6(5, 3) 
+ a^6(2, 1) c^6({\bf 2}, 3)$, \\ 
$\varphi^7 (d^7(3, {\bf 3})) 
= - d^6(3, {\bf 3})
+ a^6(3, 5) c^6({\bf 2}, 2) 
+ \mu^{-1} d^6(3, {\bf 2}) a^6(1, 2)$, \\ 
$\varphi^6 (a^6(i, j)) = a^5(i, j)$, 
$\varphi^6 (c^6({\bf k}, i)) = c^5({\bf k}, i)$, 
$\varphi^6 (d^6(i, {\bf k})) = d^5(i, {\bf k})$, \\ 
$\varphi^6 (e^6({\bf k1}, {\bf k2})) = e^5({\bf k1}, {\bf k2})$, 
$\varphi^6 (f^6({\bf k})) = f^5({\bf k})$, \\ 
where 
$i \neq j \in \{ 1, \cdots, n \}$, 
${\bf k}, {\bf k1}, {\bf k2} \in \{ {\bf 2}, \cdots, {\bf m} \}$, 
$({\bf k}, i) \neq ({\bf 3}, 3)$.

Next we define a tame isomorphism 
$\varphi^6 \colon (CY^6(D_0), \partial^6) \to (CY^5(D_0), \partial^5)$ 
by \\ 
$\varphi^6 (c^6({\bf 3}, k)) 
= \mu^{-1} (c^5({\bf 3}, k) 
+ d^5(2, {\bf 2}) a^5(5, k) 
- a^5(2, 1) c^5({\bf 2}, k) 
+ a^5(2, 1) c^5({\bf 2}, 3) a^5(3, k) 
- c^5({\bf 3}, 3) a^5(3, k))$, \\ 
$\varphi^6 (d^6(k, {\bf 3})) 
= \mu d^5(k, {\bf 3}) 
+ a^5(k, 5) c^5({\bf 2}, 2) 
- a^5(k, 3) d^5(3, {\bf 3}) 
+ \mu^{-1} a^5(k, 3) d^5(3, {\bf 2}) a^5(1, 2) 
- d^5(k, {\bf 2}) a^5(1, 2)$, \\ 
$\varphi^6 (a^6(i, j)) = a^5(i, j)$, 
$\varphi^6 (c^6({\bf j}, i)) = c^5({\bf j}, i)$, 
$\varphi^6 (d^6(i, {\bf j})) = d^5(i, {\bf j})$, \\ 
$\varphi^6 (c^6({\bf 3}, 3)) = c^5({\bf 3}, 3)$, 
$\varphi^6 (d^6(3, {\bf 3})) = d^5(3, {\bf 3})$, 
$\varphi^6 (e^6({\bf k1}, {\bf k2})) = e^5({\bf k1}, {\bf k2})$, 
$\varphi^6 (f^6({\bf k})) = f^5({\bf k})$, \\ 
where 
$i \neq j \in \{ 1, \cdots, n \}$, 
$k \in \{ 1, 2 \} \cup \{ 4, 5, \cdots, n \}$, 
${\bf j} \in \{ {\bf 2} \} \cup \{ {\bf 4}, \cdots, {\bf m} \}$, 
${\bf k}, {\bf k1}, {\bf k2} \in \{ {\bf 2}, \cdots, {\bf m} \}$.

Finally we define an isomorphism 
$\varphi^5 \colon (CY^5(D_0), \partial^5) \to (CY^4(D_0), \partial^4)$ 
by \\ 
$\varphi^5 (a^5(i, j)) = a^4(i, j)$, 
$\varphi^5 (c^5({\bf k}, i)) = c^4({\bf k}, i)$, 
$\varphi^5 (d^5(i, {\bf k})) = d^5(i, {\bf k})$, \\ 
$\varphi^5 (c^5({\bf 2}, i)) = c^4({\bf m+2}, i)$, 
$\varphi^5 (d^5(i, {\bf 2})) = d^4(i, {\bf m+2})$, \\ 
$\varphi^5 (c^5({\bf 3}, i)) = c^4({\bf m+1}, i)$, 
$\varphi^5 (d^5(i, {\bf 3})) = d^4(i, {\bf m+1})$, \\ 
$\varphi^5 (e^5({\bf k}, {\bf j})) = e^4({\bf k}, {\bf j})$, 
$\varphi^5 (f^5({\bf k})) = f^4({\bf k})$, \\ 
$\varphi^5 (e^5({\bf 2}, {\bf 2})) = e^4({\bf m+2}, {\bf m+2})$, 
$\varphi^5 (e^5({\bf 2}, {\bf 3})) = e^4({\bf m+2}, {\bf m+1})$, \\ 
$\varphi^5 (e^5({\bf 3}, {\bf 2})) = e^4({\bf m+1}, {\bf m+2})$, 
$\varphi^5 (e^5({\bf 3}, {\bf 3})) = e^4({\bf m+1}, {\bf m+1})$, \\ 
$\varphi^5 (f^5({\bf 2})) = f^4({\bf m+2})$, 
$\varphi^5 (f^5({\bf 3})) = f^4({\bf m+1})$, \\ 
where 
$i \neq j \in \{ 1, \cdots, n \}$, 
${\bf j}, {\bf k} \in \{ {\bf 4}, \cdots, {\bf m} \}$.

It is straightforward to see that 
$(CY^4(D_0), \partial^4)$ is isomorphic to $(CY^1(D_1), \partial^1)$. 
This shows that 
$(CY^0(D_0), \partial^0)$ 
is tame isomorphic to $(CY^1(D_1), \partial^1)$, and that 
$(CY(D_0), \partial)$ 
is stably tame isomorphic to $(CY(D_1), \partial)$. 
\end{proof}

\section{Yoshikawa move $\Omega_4$}

\begin{figure}
\begin{center}
\includegraphics{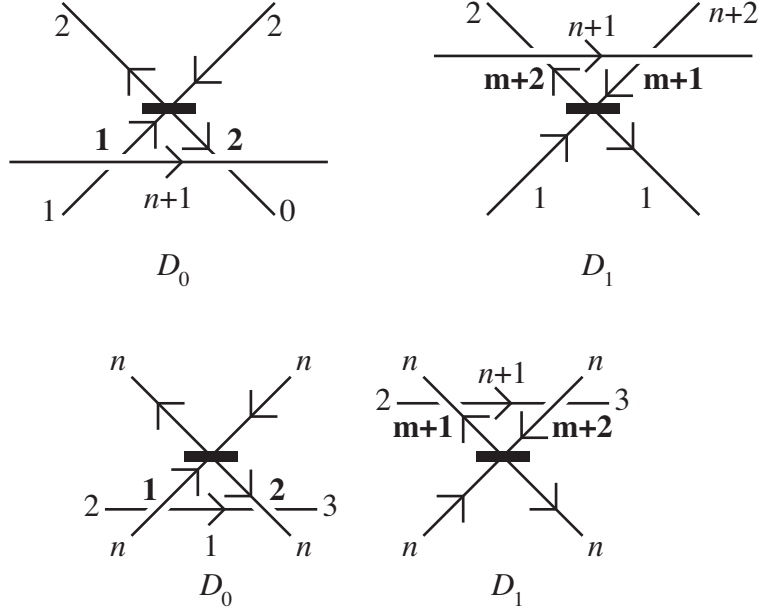}
\caption{Yoshikawa move $\Omega_4$ (upper pair) and 
Yoshikawa move $\Omega_4'$ (lower pair)}
\label{yoshikawa4figure}
\end{center}
\end{figure}

\begin{proposition} \label{yoshikawa4-1} 
Suppose that a diagram $D_1$ of an oriented marked graph $G$ 
is obtained from a diagram $D_0$ of $G$ 
by applying one Yoshikawa move $\Omega_4$, 
as illustrated in Figure \ref{yoshikawa4figure}. 
Then $(CY(D_0), \partial)$ is stably tame isomorphic to 
$(CY(D_1), \partial)$. 
\end{proposition}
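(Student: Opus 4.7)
The plan is to follow the same two-stage template that worked for $\Omega_2$ and $\Omega_3$: first reduce $(CY(D_1),\partial)$ to a DGA of the same size as $(CY(D_0),\partial)$ by destabilizing along the newly created crossings, then bridge the remaining discrepancy (which is concentrated at the marked vertex) by a short chain of elementary isomorphisms. The move $\Omega_4$ slides a strand past a marked vertex, creating two new crossings on $D_1$; label these crossings by ${\bf m+1}$ and ${\bf m+2}$, and label the two short arcs between them and the marked vertex by $n+1$ and $n+2$. All other arcs and crossings of $D_1$ inherit the labels of their counterparts in $D_0$.

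First I would carry out destabilizations along ${\bf m+1}\rightarrow n+1$ and ${\bf m+2}\rightarrow n+2$ on $(CY(D_1),\partial)$ in the sense introduced after Proposition \ref{yoshikawa1-1}, producing a differential graded algebra $(CY^1(D_1),\partial^1)$. Exactly as in the proof of Proposition \ref{yoshikawa3}, these destabilizations kill the pairs $(c({\bf m+i}),a(n+i,\cdot))$ and $(d(\cdot,{\bf m+i}),a(\cdot,n+i))$, as well as the corresponding $e$- and $f$-pairs, and express the remaining generators in terms of the arcs that survive. On the $D_0$ side no destabilization is needed, but I will rewrite $(CY(D_0),\partial)$ so that the labels attached to the marked vertex involved in the move agree in name with those on the $D_1$ side; this just amounts to renaming.

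The key observation is that $\Omega_4$ permutes the roles $\ell_{\bf x},r_{\bf x},o_{\bf x}$ of the crossings adjacent to the marked vertex, so although the destabilized algebra $(CY^1(D_1),\partial^1)$ has the same generators as $(CY(D_0),\partial)$, the differentials of the $c$, $d$, $e$, $f$ generators attached to the marked vertex do not match on the nose. I would then construct a sequence of tame isomorphisms $\varphi^0,\varphi^1,\ldots$ of the form used in the $\Omega_3$ proof, each one replacing a single $c$, $d$, $e$ or $f$ generator near the marked vertex by itself plus correction terms built from products of lower-order generators. After this chain of elementary substitutions the two DGAs will become literally isomorphic, which establishes the stable tame isomorphism.

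The main obstacle will be purely computational: the differentials of $e({\bf x},{\bf y})$ and $f({\bf x})$ at the crossings ${\bf m+1}$ and ${\bf m+2}$ involve products of the $a$-generators on the four edges meeting the marked vertex, and after destabilization these become lengthy polynomials. Finding the exact corrections needed to convert $\partial^1 e^1$, $\partial^1 f^1$ near the marked vertex into the differentials $\partial e$, $\partial f$ coming from $D_0$ requires carefully tracking, for each neighboring crossing of the marked vertex, how the $\ell/r/o$ assignment is modified by $\Omega_4$ and pushing the mismatch into an $\varphi$-correction. I expect no new conceptual difficulty beyond what appears in Proposition \ref{yoshikawa3}, only careful bookkeeping of the labels through the move region.
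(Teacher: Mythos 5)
There is a genuine gap, and it comes from a misreading of the move itself. In the Yoshikawa move $\Omega_4$ \emph{both} diagrams of the pair contain two crossings in the move region: a transverse over-strand crosses two of the four edges of the marked vertex on one side of the vertex in $D_0$, and the other two edges in $D_1$. No crossings are ``created''; they are relocated. Your plan destabilizes only on the $D_1$ side (along ${\bf m+1}\rightarrow n+1$ and ${\bf m+2}\rightarrow n+2$) and asserts that ``on the $D_0$ side no destabilization is needed.'' But $CY(D_0)$ still carries the full complement of generators $c({\bf 1},\cdot)$, $c({\bf 2},\cdot)$, $d(\cdot,{\bf 1})$, $d(\cdot,{\bf 2})$, $e(\cdot,\cdot)$, $f({\bf 1})$, $f({\bf 2})$ coming from its two crossings, together with the extra arc labels produced by subdividing the under-arcs. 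After your one-sided destabilization the two algebras have different numbers of generators in each degree, and elementary isomorphisms preserve generator counts, so no chain of tame isomorphisms of the kind you describe can close the gap. The paper's proof destabilizes on \emph{both} sides --- along ${\bf 2}\rightarrow 0$ and ${\bf 1}\rightarrow 2$ on $(CY(D_0),\partial)$, and along ${\bf m+1}\rightarrow n+2$ and ${\bf m+2}\rightarrow 2$ on $(CY(D_1),\partial)$ --- after which the two resulting DGAs are literally isomorphic.

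A secondary misconception: the DGA $CY(D)$ has no generators ``attached to the marked vertex'' --- a marked vertex is simply an interior point of an arc and is invisible to the algebra, which only sees arcs and crossings. So the corrective chain of substitutions modelled on the $\Omega_3$ proof, which you anticipate being needed to reconcile differentials near the vertex, does not arise here; once both sides are destabilized correctly the match is on the nose, as in the $\Omega_2$ case rather than the $\Omega_3$ case.
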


\begin{proof} 
Let ${\bf 1}, {\bf 2}$ denote labels on crossings of $D_0$ 
involved in the Yoshikawa move. 
See Figure \ref{yoshikawa4figure}. 
Let $n+1$ denote a label on 
the over-arc of $D_0$ 
at the crossing with label ${\bf 1}$. 
The arc of $D_0$ 
that contains the marked vertex involved in the move 
is labeled by $2$. 
Let $1$ and $2$ (resp. $0$ and $2$) denote labels 
on under-arcs of $D_0$ 
at the crossing with label ${\bf 1}$ (resp. ${\bf 2}$). 
Perform destabilizations on $(CY(D_0), \partial)$ along ${\bf 2} \to 0$, 
and ${\bf 1} \to 2$ in this order, and 
we obtain $(CY^0(D_0), \partial^0)$.

Let ${\bf m+1}, {\bf m+2}$ 
denote labels on crossings of $D_1$ 
involved in the move. 
Let $n+1$ denote a label on 
the over-arc of $D_1$ 
at the crossing with label ${\bf m+1}$. 
The arc of $D_1$ 
that contains the marked vertex involved in the move 
is labeled by $1$. 
Let $n+2$ and $1$ (resp. $2$ and $1$) denote labels 
on under-arcs of $D_1$ 
at the crossing with label ${\bf m+1}$ (resp. ${\bf m+2}$). 
Perform destabilizations on $(CY(D_1), \partial)$ along ${\bf m+1} \to n+2$, 
and ${\bf m+2} \to 2$ in this order, and 
we obtain $(CY^1(D_1), \partial^1)$.

It is straightforward to see that 
$(CY^0(D_0), \partial^0)$ is 
isomorphic to 
$(CY^1(D_1), \partial^1)$. 
This shows that $(CY(D_0), \partial)$ is 
stably tame isomorphic to $(CY(D_1), \partial)$. 
\end{proof}

\section{Yoshikawa move $\Omega_4'$}

\begin{proposition} \label{yoshikawa4-2} 
Suppose that a diagram $D_1$ of an oriented marked graph $G$ 
is obtained from a diagram $D_0$ of $G$ 
by applying one Yoshikawa move $\Omega_4'$, 
as illustrated in Figure \ref{yoshikawa4figure}. 
Then 
$(CY(D_0), \partial)$ is isomorphic to 
$(CY(D_1), \partial)$. 
\end{proposition}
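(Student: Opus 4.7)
The plan is to exhibit a direct algebra isomorphism $\phi:(CY(D_0),\partial)\to(CY(D_1),\partial)$ that sends each generator to its natural counterpart, rather than passing through any stabilization/destabilization. This is reasonable because the $\Omega_4'$ move, as pictured in the lower pair of Figure \ref{yoshikawa4figure}, neither introduces nor removes crossings or marked vertices; it only slides an arc across a marked vertex, so the arcs and crossings of $D_1$ correspond bijectively to those of $D_0$. That is exactly why the conclusion claimed is genuine isomorphism and not merely stable tame isomorphism, in contrast to Proposition \ref{yoshikawa4-1}.

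First I would fix a labeling of the arcs of $D_0$ by $1,\dots,n$ and the crossings of $D_0$ by $\mathbf{1},\dots,\mathbf{m}$, and transport these labels to $D_1$ via the evident bijection outside the small disk where the move is performed. Inside that disk, I would label the finitely many arcs and crossings touching the marked vertex on both sides so that the choices of over-strand label $o_{\mathbf{x}}$ and left/right under-strand labels $\ell_{\mathbf{x}}, r_{\mathbf{x}}$ at each crossing match under the correspondence. Then I would define $\phi$ as the identity on generators, $\phi(a(i,j))=a(i,j)$, $\phi(c(\mathbf{x},i))=c(\mathbf{x},i)$, $\phi(d(i,\mathbf{x}))=d(i,\mathbf{x})$, $\phi(e(\mathbf{x},\mathbf{y}))=e(\mathbf{x},\mathbf{y})$, $\phi(f(\mathbf{x}))=f(\mathbf{x})$, and extend multiplicatively.

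For every crossing $\mathbf{x}$ outside the move disk, the triple $(\ell_{\mathbf{x}},r_{\mathbf{x}},o_{\mathbf{x}})$ together with the orientation data is unchanged, so the formulas of Section 2 for $\partial c(\mathbf{x},i)$, $\partial d(i,\mathbf{x})$, $\partial e(\mathbf{x},\mathbf{y})$, and $\partial f(\mathbf{x})$ in $CY(D_0)$ and $CY(D_1)$ agree, and $\phi$ commutes with $\partial$ on them tautologically. The real content is therefore a local check at the crossings involved in the move: one must verify that for each such crossing the triple $(\ell,r,o)$ read from $D_1$ agrees, under the chosen labeling, with that read from $D_0$, and hence that all differential formulas continue to match on the nose.

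The main obstacle is precisely this local verification. Whether the literal identity map works, or whether one must adjust a few of the images of $\phi$ by a unit such as $\mu^{\pm 1}$, or a short linear combination to absorb an orientation effect at the marker, depends on how $\Omega_4'$ permutes the arcs incident to the two saddle resolutions of the marked vertex and on the sign conventions at the adjacent crossing. Once the labeling is chosen so the two local pictures coincide combinatorially, the verification reduces to comparing the differentials of the handful of generators associated to arcs and crossings inside the disk, each comparison being a direct consequence of the definitions of $\partial$ from Section 2 and the convention $a(i,i)=1+\mu$. Since $\phi$ is then a bijective algebra map respecting $\partial$, it is the required isomorphism of differential graded algebras.
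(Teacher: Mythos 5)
Your plan matches the paper's proof: the paper defines exactly the generator-by-generator relabeling isomorphism you describe (arc $1\mapsto n+1$, crossings $\mathbf{1},\mathbf{2}\mapsto\mathbf{m+1},\mathbf{m+2}$, all other labels fixed), with no stabilizations. The one point you leave open is resolved in the simplest way: since the marked-vertex arc is the over-strand at both crossings before and after $\Omega_4'$, the $(\ell,r,o)$ data is preserved under the relabeling and the literal identity on generators works, with no corrections by units of $\mu$.
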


\begin{proof} 
Let ${\bf 1}, {\bf 2}$ denote labels on crossings of $D_0$ 
involved in the Yoshikawa move. 
See Figure \ref{yoshikawa4figure}. 
The arc of $D_0$ that contains the marked vertex involved in the move 
is labeled by $n$. 
Let $1$ and $2$ (resp. $1$ and $3$) 
denote labels on under-arcs of $D_0$ 
at the crossing with label ${\bf 1}$ (resp. ${\bf 2}$). 

Let ${\bf m+1}, {\bf m+2}$ denote labels on crossings of $D_1$ 
involved in the move. 
The arc of $D_1$ 
that contains the marked vertex involved in the move 
is labeled by $n$. 
Let $n+1$ and $2$ (resp. $n+1$ and $3$) 
denote labels on under-arcs of $D_1$ 
at the crossing with label ${\bf m+1}$ (resp. ${\bf m+2}$).

We define an isomorphism 
$\phi \colon (CY(D_0), \partial) \to (CY(D_1), \partial)$ 
by \\ 
$\phi (a(k, 1)) = a(k, n+1)$, 
$\phi (a(1, k)) = a(n+1, k)$, 
$\phi (a(j, i)) = a(j, i)$, \\ 
$\phi (c({\bf x}, i)) = c({\bf x}, i)$, 
$\phi (d(i, {\bf x})) = d(i, {\bf x})$, 
$\phi (c({\bf x}, 1)) = c({\bf x}, n+1)$, 
$\phi (d(1, {\bf x})) = d(n+1, {\bf x})$, \\ 
$\phi (c({\bf 1}, k)) = c({\bf m+1}, k)$, 
$\phi (c({\bf 1}, 1)) = c({\bf m+1}, n+1)$, \\ 
$\phi (d(k, {\bf 1})) = d(k, {\bf m+1})$, 
$\phi (d(1, {\bf 1})) = d(n+1, {\bf m+1})$, \\ 
$\phi (c({\bf 2}, k)) = c({\bf m+2}, k)$, 
$\phi (c({\bf 2}, 1)) = c({\bf m+2}, n+1)$, \\ 
$\phi (d(k, {\bf 2})) = d(k, {\bf m+2})$, 
$\phi (d(1, {\bf 2})) = d(n+1, {\bf m+2})$, \\ 
$\phi (e({\bf x}, {\bf 1})) = e({\bf x}, {\bf m+1})$, 
$\phi (e({\bf 1}, {\bf x})) = e({\bf m+1}, {\bf x})$, \\ 
$\phi (e({\bf x}, {\bf 2})) = e({\bf x}, {\bf m+2})$, 
$\phi (e({\bf 2}, {\bf x})) = e({\bf m+2}, {\bf x})$, \\ 
$\phi (e({\bf 1}, {\bf 1})) = e({\bf m+1}, {\bf m+1})$, 
$\phi (e({\bf 1}, {\bf 2})) = e({\bf m+1}, {\bf m+2})$, \\ 
$\phi (e({\bf 2}, {\bf 1})) = e({\bf m+2}, {\bf m+1})$, 
$\phi (e({\bf 2}, {\bf 2})) = e({\bf m+2}, {\bf m+2})$, \\ 
$\phi (f({\bf x})) = f({\bf x})$, 
$\phi (f({\bf 1})) = f({\bf m+1})$, 
$\phi (f({\bf 2})) = f({\bf m+2})$ \\ 
for $k \in \{ 2, 3, \cdots, n \}$, 
$i \neq j \in \{ 1, \cdots, n \}$, 
${\bf x} \in \{ {\bf 3}, {\bf 4}, \cdots, {\bf m} \}$. 
This 
shows that 
$(CY(D_0), \partial)$ is isomorphic to $(CY(D_1), \partial)$. 
\end{proof}

\section{Yoshikawa move $\Omega_5$}

\begin{figure}
\begin{center}
\includegraphics{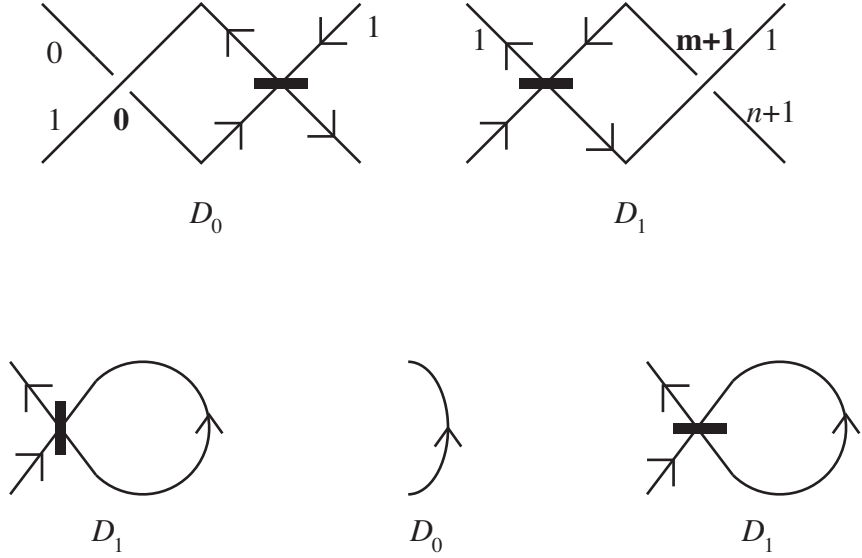}
\caption{Yoshikawa move $\Omega_5$ (upper pair) and 
Yoshikawa move $\Omega_6$ (lower left pair) and 
$\Omega_6'$ (lower right pair)}
\label{yoshikawa56figure}
\end{center}
\end{figure}

\begin{proposition} \label{yoshikawa5} 
Suppose that a diagram $D_1$ of an oriented marked graph $G$ 
is obtained from a diagram $D_0$ of $G$ 
by applying one Yoshikawa move $\Omega_5$, 
as illustrated in Figure \ref{yoshikawa56figure}. 
Then 
$(CY(D_0), \partial)$ 
is stably tame isomorphic to $(CY(D_1), \partial)$. 
\end{proposition}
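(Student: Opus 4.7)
The plan is to mimic the destabilization-plus-tame-isomorphism strategy used in Propositions~\ref{yoshikawa2} and \ref{yoshikawa3}. First, I would label arcs and crossings of $D_0$ and $D_1$ so that labels outside the disc where the move is performed correspond; the move $\Omega_5$ involves a marked vertex together with two crossings appearing on one side of the marker in $D_0$ and on the opposite side in $D_1$, so I would assign fresh labels to the two short arcs created between the vertex and each of the two crossings in both diagrams.

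Next, I would perform a sequence of two destabilizations along ``crossing $\to$ short arc'' on $(CY(D_0), \partial)$, one for each of the two crossings involved in the move, producing a reduced algebra $(CY^0(D_0), \partial^0)$. The same procedure on $(CY(D_1), \partial)$ yields $(CY^1(D_1), \partial^1)$. This step kills the local $c$, $d$, $e$, $f$ generators attached to the two crossings and restricts the remaining differentials to expressions involving only the long arcs passing through the neighborhood of the move.

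The two reduced algebras have the same generators but in general differ in the $\partial$-images of the generators $c(\mathbf{x}, i)$, $d(i, \mathbf{x})$, $e(\mathbf{x}, \mathbf{y})$ whose overarc or underarc traverses the region of the move. To reconcile them, I would construct a composition of tame isomorphisms $\varphi^0, \varphi^1, \ldots$ analogous to those in the proof of Proposition~\ref{yoshikawa3}, successively modifying the $e$-, $c$-, $d$-generators so that the discrepancy in the $c$- and $d$-relations induced by the relocation of the marker is absorbed into corrections built from $a(\cdot, \cdot)$, $c$, and $d$ at the neighboring arcs.

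The main obstacle will be the bookkeeping in this last step: locating the correct substitutions requires tracking how the factors $a(\ell_{\mathbf{x}}, o_{\mathbf{x}}) a(o_{\mathbf{x}}, \cdot)$ at each nearby crossing reshape when the marked vertex is translated across a strand, and verifying that the proposed $\varphi$ is indeed a chain map. Once the correct substitutions are found, a direct inspection should exhibit the resulting algebra as isomorphic to $(CY^1(D_1), \partial^1)$, which would establish that $(CY(D_0), \partial)$ is stably tame isomorphic to $(CY(D_1), \partial)$.
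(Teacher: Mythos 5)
Your overall strategy---destabilize away the crossings created or displaced by the move and then compare the reduced algebras---is the same one the paper uses, but your description of the local picture of $\Omega_5$ is wrong, and this derails the rest of the plan. The move $\Omega_5$ involves a \emph{single} crossing in each of $D_0$ and $D_1$ (a crossing between two edges incident to the marked vertex, sitting on one side of the marker in $D_0$ and on the other side in $D_1$); the configuration you describe, with a strand producing two crossings that migrate past the vertex, is the move $\Omega_4$/$\Omega_4'$, which is handled in Proposition \ref{yoshikawa4-1}. Accordingly, the correct procedure is one destabilization on each side: with the arc containing the marked vertex labeled $1$ and the under-arcs at the crossing ${\bf 0}$ of $D_0$ labeled $0$ and $1$, one destabilizes along ${\bf 0}\to 0$; symmetrically one destabilizes along ${\bf m+1}\to n+1$ in $D_1$. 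Your proposed ``two destabilizations per diagram'' does not match the diagrams being compared.

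The second problem is that you defer the entire mathematical content to an unexecuted step: you announce that a composition of tame isomorphisms ``analogous to those in the proof of Proposition \ref{yoshikawa3}'' will reconcile the two reduced algebras, and then explicitly flag finding those substitutions as the main unresolved obstacle. As it happens, no such chain of tame isomorphisms is needed for $\Omega_5$: after the single destabilization on each side, the two reduced differential graded algebras $(CY^0(D_0),\partial^0)$ and $(CY^1(D_1),\partial^1)$ are isomorphic by a relabeling of generators, which one verifies by writing out the reduced differentials at the surviving crossings. But whether the final identification is a plain isomorphism or a tame isomorphism, exhibiting it (or at least the data determining it) is where the proof lives, and your proposal does not supply it. As written, the argument is a plan built on an incorrect local model of the move, with its decisive step left open.
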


\begin{proof} 
Let ${\bf 0}$ denote a label on the crossing of $D_0$ 
involved in the Yoshikawa move. 
See Figure \ref{yoshikawa56figure}. 
Let $1$ denote a label on the arc of $D_0$ 
that contains the marked vertex involved in the move. 
Under-arcs of $D_0$ 
at the crossing with label ${\bf 0}$ 
are labeled by $0$ and $1$. 
Perform a destabilization along ${\bf 0} \to 0$ on $(CY(D_0), \partial)$, and 
we obtain $(CY^0(D_0), \partial^0)$. 

Let ${\bf m+1}$ denote a label on the crossing of $D_1$ 
involved in the move. 
Let $1$ denote a label on the arc of $D_1$ 
that contains the marked vertex involved in the move. 
Under-arcs of $D_1$ 
at the crossing with label ${\bf m+1}$ 
are labeled by $1$ and $n+1$. 
Perform a destabilization along ${\bf m+1} \to n+1$ on $(CY(D_1), \partial)$, and 
we obtain $(CY^1(D_1), \partial^1)$. 

It is straightforward to see that 
$(CY^0(D_0), \partial^0)$ is isomorphic to $(CY^1(D_1), \partial^1)$. 
This shows that $(CY(D_0), \partial)$ 
is stably tame isomorphic to $(CY(D_1), \partial)$. 
\end{proof}

\section{Yoshikawa moves $\Omega_6$, $\Omega_6'$ and $\Omega_7$}

We notice that 
one arc is involved in the Yoshikawa move $\Omega_6$, $\Omega_6'$ 
and $\Omega_7$. 
It is straightforward to see the following propositions.

\begin{proposition} \label{yoshikawa6-1} 
Suppose that a diagram $D_1$ of an oriented marked graph $G$ 
is obtained from a diagram $D_0$ of $G$ 
by applying one Yoshikawa move 
$\Omega_6$ or $\Omega_6'$, 
as illustrated in Figure \ref{yoshikawa56figure}. 
Then 
$(CY(D_0), \partial)$ 
is isomorphic to $(CY(D_1), \partial)$. 
\end{proposition}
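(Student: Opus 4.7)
The plan is to exhibit a direct isomorphism of differential graded algebras induced by a natural bijection between the generators of $CY(D_0)$ and $CY(D_1)$. The crucial observation, already emphasized just before the statement, is that the moves $\Omega_6$ and $\Omega_6'$ are local modifications of the diagram around a single marked vertex that involve only one arc. In particular, no crossings are created or destroyed, and no arcs are created, destroyed, or split by the move: the set of arcs of $D_0$ and the set of arcs of $D_1$ are in canonical bijection, and similarly for the crossings.

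First I would fix labels $1,\dots,n$ on the arcs of $D_0$ and $\mathbf{1},\dots,\mathbf{m}$ on its crossings, and label the arcs and crossings of $D_1$ so that the arc (resp.\ crossing) of $D_1$ labeled $i$ (resp.\ $\mathbf{x}$) corresponds to the arc (resp.\ crossing) of $D_0$ with the same label under the natural bijection induced by the move. Since the move only repositions (or re-marks) the unique arc through the marked vertex without affecting any over/under data, for every crossing $\mathbf{x}$ the triple $(o_{\mathbf{x}}, \ell_{\mathbf{x}}, r_{\mathbf{x}})$ computed in $D_0$ agrees with the one computed in $D_1$.

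Next I would define an algebra map $\phi\colon(CY(D_0),\partial)\to(CY(D_1),\partial)$ sending each generator to the corresponding generator under this label bijection, i.e.\ $\phi(a(i,j))=a(i,j)$, $\phi(c(\mathbf{x},i))=c(\mathbf{x},i)$, $\phi(d(i,\mathbf{x}))=d(i,\mathbf{x})$, $\phi(e(\mathbf{x},\mathbf{y}))=e(\mathbf{x},\mathbf{y})$, $\phi(f(\mathbf{x}))=f(\mathbf{x})$. Because the formulas for $\partial$ depend only on the labels $o_{\mathbf{x}}, \ell_{\mathbf{x}}, r_{\mathbf{x}}$ at crossings, and these are preserved by the bijection, $\phi$ automatically commutes with $\partial$. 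The map $\phi$ is manifestly an isomorphism of graded algebras, with inverse given by the corresponding relabeling in the other direction.

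The only thing that needs checking is that the move genuinely does not disturb the crossing data or the partition of the diagram into arcs; this is a diagrammatic verification by inspection of Figure~\ref{yoshikawa56figure}. Since $\Omega_6$ and $\Omega_6'$ are purely local modifications involving a single strand and a marked vertex, and no crossing endpoints are rearranged, this verification is routine and is what the author refers to as being straightforward. There is no real obstacle; the only step requiring care is to double-check the orientation conventions on the arc passing through the marked vertex so that the labels $\ell_{\mathbf{x}}, r_{\mathbf{x}}, o_{\mathbf{x}}$ at any nearby crossing indeed match on both sides of the move.
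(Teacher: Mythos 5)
Your proposal is correct and coincides with the paper's (essentially omitted) argument: the paper merely notes that only one arc is involved in $\Omega_6$, $\Omega_6'$ and declares the proposition straightforward, and your write-up supplies exactly the intended justification — no crossings or arcs are created or destroyed (the added marked vertex and its loop lie on the same arc, since arcs are broken only at undercrossings), so the identity map on generators is a DGA isomorphism.
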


\begin{figure}
\begin{center}
\includegraphics{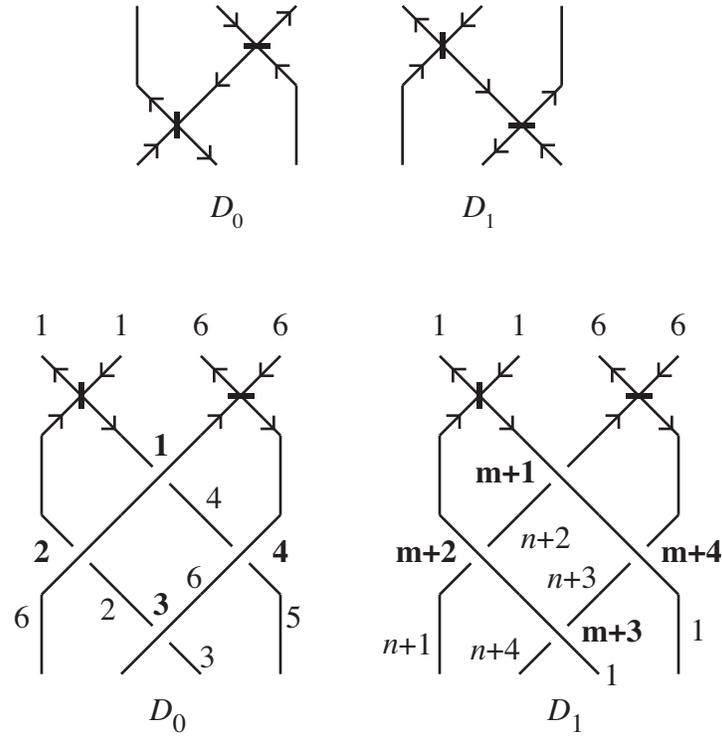}
\caption{Yoshikawa move $\Omega_7$ (upper pair) and 
Yoshikawa move $\Omega_8$ (lower pair)}
\label{yoshikawa78figure}
\end{center}
\end{figure}

\begin{proposition} \label{yoshikawa7} 
Suppose that a diagram $D_1$ of an oriented marked graph $G$ 
is obtained from a diagram $D_0$ of $G$ 
by applying one Yoshikawa move $\Omega_7$, 
as illustrated in Figure \ref{yoshikawa78figure}. 
Then 
$(CY(D_0), \partial)$ 
is isomorphic to $(CY(D_1), \partial)$. 
\end{proposition}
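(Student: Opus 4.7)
The plan is to exhibit an explicit isomorphism between $(CY(D_0), \partial)$ and $(CY(D_1), \partial)$ by matching generators directly, along the lines of the proof of Proposition \ref{yoshikawa4-2}. The crucial observation (emphasized by the author just before the proposition) is that the move $\Omega_7$ involves only \emph{one} arc, so it neither creates nor destroys crossings and neither splits nor merges arcs. Consequently, there is a canonical bijection between arcs of $D_0$ and arcs of $D_1$, and between crossings of $D_0$ and crossings of $D_1$, which extends to a bijection between the generating sets
\[
\{a(i,j)\},\ \{c(\mathbf{x},i)\},\ \{d(i,\mathbf{x})\},\ \{e(\mathbf{x},\mathbf{y})\},\ \{f(\mathbf{x})\}
\]
of $CY(D_0)$ and the corresponding generating sets of $CY(D_1)$.

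Concretely, I would label arcs of $D_0$ by $1,\dots,n$ and crossings by $\mathbf{1},\dots,\mathbf{m}$, and label arcs and crossings of $D_1$ by the same symbols via the bijection induced by the move, then define $\phi\colon (CY(D_0),\partial)\to(CY(D_1),\partial)$ to send every generator to the generator with the same label. It is evidently an algebra isomorphism of graded unital algebras over $\mathbb{Z}[\mu,\mu^{-1}]$, so the only thing left to verify is that $\phi$ commutes with the differential.

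For this verification, the key point is that the differentials $\partial c(\mathbf{x},i)$, $\partial d(i,\mathbf{x})$, $\partial e(\mathbf{x},\mathbf{y})$, $\partial f(\mathbf{x})$ are determined purely by the local data at each crossing $\mathbf{x}$, namely the labels $o_{\mathbf{x}}$, $\ell_{\mathbf{x}}$, $r_{\mathbf{x}}$ of the overarc and the two underarcs. Since $\Omega_7$ does not alter any crossing and does not change which arc sits on either side of any crossing (only one arc is affected, and it is not an underarc or overarc of any crossing that gets reidentified), the labels $o_{\mathbf{x}}, \ell_{\mathbf{x}}, r_{\mathbf{x}}$ at every crossing $\mathbf{x}$ are identical in $D_0$ and $D_1$. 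Thus each defining formula for $\partial$ on a generator of $CY(D_0)$ pulls back under $\phi^{-1}$ to exactly the defining formula for $\partial$ on the corresponding generator of $CY(D_1)$, and $\partial\circ\phi=\phi\circ\partial$ holds on generators; by the signed Leibniz rule it then holds on all of $CY(D_0)$.

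The main thing to be careful about is ensuring that the bijection on arcs really is well-defined, i.e. that the single arc involved in $\Omega_7$ on $D_0$ corresponds to a single arc on $D_1$ (no splitting into two arcs by a newly created crossing, and no merging of two arcs by removal of a crossing). This is exactly the content of the author's preliminary remark and is visible from Figure \ref{yoshikawa78figure}. Once this is granted, the rest of the argument is a direct check, and no stabilization or destabilization is required; the differential graded algebras are isomorphic on the nose.
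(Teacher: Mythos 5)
Your argument is correct and is exactly the reasoning the paper intends: the author gives no written proof beyond the remark that only one arc (and no crossing) is involved in $\Omega_7$, so the generators and the local data $o_{\mathbf{x}},\ell_{\mathbf{x}},r_{\mathbf{x}}$ defining $\partial$ are in canonical bijection and the label-preserving map is a DGA isomorphism. Your write-up simply makes explicit what the paper declares ``straightforward to see.''
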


\section{Yoshikawa move $\Omega_8$}

\begin{proposition} \label{yoshikawa8} 
Suppose that a diagram $D_1$ of an oriented marked graph $G$ 
is obtained from a diagram $D_0$ of $G$ 
by applying one Yoshikawa move $\Omega_8$, 
as illustrated in Figure \ref{yoshikawa78figure}. 
Then 
$(CY(D_0), \partial)$ 
is stably tame isomorphic to $(CY(D_1), \partial)$. 
\end{proposition}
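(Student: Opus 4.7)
The plan is to mimic the strategy used for the other Yoshikawa moves: reduce both sides by a sequence of destabilizations to algebras whose generators correspond naturally, and then exhibit either a direct isomorphism or a chain of elementary tame isomorphisms between them. First I would fix labels on both diagrams using Figure \ref{yoshikawa78figure}. The move $\Omega_8$ involves two marked vertices connected by a single strand passing through a crossing, so I would label the crossings of $D_0$ created or affected by the move (say ${\bf 1}, {\bf 2}$ if the two sides have the same crossing count, otherwise augment by ${\bf m+1}$, ${\bf m+2}$ on $D_1$) and assign labels to the short arcs bounded between the two marked vertices and the crossings on each diagram.

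Next I would perform destabilizations of the type introduced in Proposition \ref{yoshikawa1-1} on both $(CY(D_0), \partial)$ and $(CY(D_1), \partial)$, choosing for each crossing involved in the move a companion arc along which to destabilize (the short arcs between a marked vertex and an adjacent crossing are the natural candidates, as in the proofs for $\Omega_2$, $\Omega_4$, and $\Omega_5$). This yields $(CY^0(D_0), \partial^0)$ and $(CY^1(D_1), \partial^1)$, each having lost the generators $c, d, e, f$ associated with the destabilized crossings, and having the $a$-generators for the eliminated arcs expressed as polynomials in the remaining generators via the destabilization substitution rules.

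Then I would compare the resulting differentials. For the marked-vertex strands, the orientation conventions around a marked vertex (the two possibilities in Figure \ref{crossing}) control which substitution appears, and I expect that after destabilization the images of the $a$-generators at the "new" arc on each side agree as polynomials in the shared generators. If the two reduced algebras match on the nose, a straightforward renaming of indices finishes the proof, as in Proposition \ref{yoshikawa4-1} and Proposition \ref{yoshikawa5}. If not, I would follow the template of Proposition \ref{yoshikawa3}: construct a composition $\varphi^0 \circ \varphi^9 \circ \cdots$ of elementary tame isomorphisms, each altering one generator of the form $e({\bf x}, {\bf y})$, $c({\bf x}, i)$, $d(i, {\bf x})$, or $f({\bf x})$ by a polynomial correction so as to progressively cancel the discrepancy between $\partial^0$ and $\partial^1$.

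The main obstacle I anticipate is the bookkeeping needed to verify that the differentials match after destabilization. Because $\Omega_8$ moves a strand across two marked vertices simultaneously, the substitutions coming from the two destabilizations interact nonlinearly through terms like $a(\ell_{\bf x}, o_{\bf x}) a(o_{\bf x}, i)$ in $\partial c$ and $\partial d$, and these products must be expanded and re-collected before either a direct isomorphism or the series of elementary corrections can be written down. I would treat the verification as a finite but tedious case check organized by the pair $({\bf j_1}, {\bf j_2})$ of generator indices, exactly as in the $\Omega_3$ proof; conceptually the identification is forced by the fact that the two diagrams represent equivalent marked graphs in ${\mathbb{R}}^3$ up to the move, so compatibility of the destabilized differentials is to be expected.
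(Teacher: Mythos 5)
Your strategy coincides with the paper's: both proofs destabilize every crossing involved in the move on each side (four crossings and six arcs per diagram, not the two crossings you guessed) along arcs adjacent to those crossings, and then observe that the two reduced differential graded algebras are directly isomorphic, so the fallback chain of elementary isomorphisms in the style of $\Omega_3$ is not needed here. The only part left implicit in your plan is the specific choice of companion arcs, which is read off from Figure \ref{yoshikawa78figure} and amounts to the finite bookkeeping you already anticipate.
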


\begin{proof} 
Let $1, 2, 3, 4, 5, 6$ denote 
labels on arcs of $D_0$ involved in the Yoshikawa move. 
See Figure \ref{yoshikawa78figure}. 
Let ${\bf 1}, {\bf 2}, {\bf 3}, {\bf 4}$ 
denote labels on crossings of $D_0$ involved in the move. 
Perform a sequence of destabilizations on $(CY(D_0), \partial)$ 
along ${\bf 1} \rightarrow 4$, 
${\bf 4} \rightarrow 5$, 
${\bf 2} \rightarrow 2$, and 
${\bf 3} \rightarrow 3$ 
in this order, and 
we obtain 
$(CY^0(D_0), \partial^0)$.

Let $1, n+1, n+2, n+3, n+4, 6$ denote 
labels on arcs of $D_1$ involved in the move. 
Let ${\bf m+1}, {\bf m+2}, {\bf m+3}, {\bf m+4}$ 
denote labels on crossings of $D_1$ involved in the move. 
Perform a sequence of destabilizations 
on $(CY(D_1), \partial)$ 
along ${\bf m+1} \rightarrow n+2$, 
${\bf m+2} \rightarrow n+1$, 
${\bf m+4} \rightarrow n+3$, and 
${\bf m+3} \rightarrow n+4$ 
in this order, and 
we obtain 
$(CY^1(D_1), \partial^1)$.

It is straightforward to see that 
$(CY^0(D_0), \partial^0)$ 
is isomorphic to $(CY^1(D_1), \partial^1)$. 
This shows that $(CY(D_0), \partial)$ is 
stably tame isomorphic to $(CY(D_1), \partial)$. 
\end{proof}

\section{Proof of Theorem \ref{main-theorem}} 

We may assume by Theorem \ref{kearton-kurlin-swenton} that 
$D_1$ is obtained from $D_0$ by a finite sequence of Yoshikawa moves 
$\Omega_1$, $\Omega_1'$, $\Omega_2$, $\Omega_3$, $\Omega_4$, 
$\Omega_4'$, 
$\Omega_5$, $\Omega_6$, $\Omega_6'$, $\Omega_7$, $\Omega_8$. 
Propositions \ref{yoshikawa1-1}, \ref{yoshikawa1-2}, 
\ref{yoshikawa2}, \ref{yoshikawa3}, \ref{yoshikawa4-1}, \ref{yoshikawa4-2}, 
\ref{yoshikawa5}, \ref{yoshikawa6-1}, \ref{yoshikawa7}, \ref{yoshikawa8} 
complete the proof of Theorem \ref{main-theorem}.

\section{Examples} 

Let $T(2, 3)$ denote a torus knot of type $(2, 3)$ in ${\mathbb{R}}^3$. 
Let $T^0(2, 3)$ 
denote a 2-sphere in ${\mathbb{R}}^4$ 
that is obtained from $T(2, 3)$ 
by the spinning construction, 
introduced by Artin \cite{artin}. 
The upper diagram in Figure \ref{example} illustrates a diagram $D^0(2, 3)$ of 
an oriented marked graph representing $T^0(2, 3)$, 
denoted by $8_1$ in Yoshikawa's table \cite{yoshikawa}. 

\begin{figure}
\begin{center}
\includegraphics{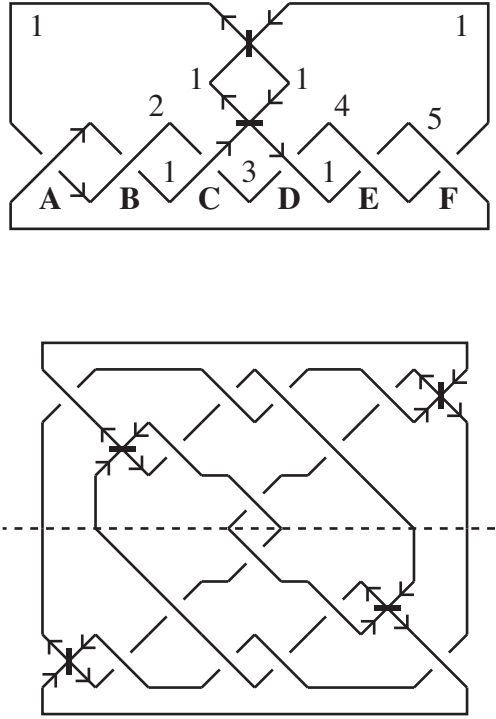}
\caption{oriented marked graph diagram $D^0(2, 3)$ representing 
$T^0(2, 3)$ (upper) and 
oriented marked graph diagram $D^2(2, 3)$ representing 
$T^2(2, 3)$ (lower)}
\label{example}
\end{center}
\end{figure}

\begin{theorem} \label{calculation-spun-trefoil} 
The 0-dimensional Yoshikawa homology of $T^0(2, 3)$, 
$HY_0(T^0(2, 3))$, is isomorphic to 
${\mathbb{Z}}[\mu, \mu^{-1}] \langle x, y \rangle / 
(\mu (1 + \mu) + x - y x, 
(1 + \mu) + \mu y - y x, 
\mu (1 + \mu) + \mu^{-1} x - \mu^{-1} x y x)$.
\end{theorem}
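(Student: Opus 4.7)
The plan is to compute $HY_0(T^0(2,3))$ as a direct quotient, using the diagram $D^0(2,3)$ depicted in Figure \ref{example}. By definition, $HY_0$ is the cokernel of $\partial\colon CY_1(D^0(2,3)) \to CY_0(D^0(2,3))$. Since $CY_0$ is the unital $\mathbb{Z}[\mu,\mu^{-1}]$-algebra generated by the noncommuting variables $a(i,j)$ for $i\neq j$ (with $a(i,i)=1+\mu$ and $\mu$ central), and the image of $\partial$ from degree $1$ is the two-sided ideal generated by $\{\partial c(\mathbf{x},i),\partial d(i,\mathbf{x})\}$ as $i$ ranges over arc labels and $\mathbf{x}$ over crossing labels, the computation reduces to simplifying a finite presentation.

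The first step is to read off Figure \ref{example} and fix labels $1,\dots,n$ on arcs and $\mathbf{1},\dots,\mathbf{m}$ on crossings of $D^0(2,3)$, together with the assignment of $o_{\mathbf{x}}$, $\ell_{\mathbf{x}}$, $r_{\mathbf{x}}$ at each crossing. I would then list all relations $\partial c(\mathbf{x},i)=0$ and $\partial d(i,\mathbf{x})=0$.

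The key simplifying device is to specialize $i$ to an arc already incident to $\mathbf{x}$, so that $a(i,i)=1+\mu$ enters. For instance, $\partial c(\mathbf{x},o_{\mathbf{x}})=0$ yields $a(r_{\mathbf{x}},o_{\mathbf{x}})=a(\ell_{\mathbf{x}},o_{\mathbf{x}})$, while $\partial d(o_{\mathbf{x}},\mathbf{x})=0$ yields $a(o_{\mathbf{x}},r_{\mathbf{x}})=a(o_{\mathbf{x}},\ell_{\mathbf{x}})$; specializing further to $i=\ell_{\mathbf{x}}$ converts $\partial c(\mathbf{x},\ell_{\mathbf{x}})=0$ into a relation of the form $\mu(1+\mu)+a(r_{\mathbf{x}},\ell_{\mathbf{x}})-a(\ell_{\mathbf{x}},o_{\mathbf{x}})a(o_{\mathbf{x}},\ell_{\mathbf{x}})=0$, which already has the shape of the first relation in the statement. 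I would use such identifications, carried out at each crossing and at each marked vertex of $D^0(2,3)$, to iteratively eliminate all but two of the $a(i,j)$ generators, naming the survivors $x$ and $y$. The remaining nontrivial relations from $\partial c(\mathbf{x},i)$ and $\partial d(i,\mathbf{x})$ at carefully chosen $(i,\mathbf{x})$ should collapse onto precisely the three relations stated.

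The main obstacle is bookkeeping: $D^0(2,3)$ has several crossings and four marked vertices, each contributing $2n$ relations, and the elimination order must be chosen with care so that relations reduce cleanly and yield no spurious extras. I expect the cleanest route is to handle the marked vertices first (where the resolutions at $L_{\pm}(D)$ give trivial links, reflecting strong identifications among the $a(i,j)$), then to sweep along the underlying trefoil's orientation through the crossings, so that at the end the only surviving symbolic variables are $x$ and $y$ and the only surviving relations are the three stated equalities.
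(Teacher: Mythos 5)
Your approach is essentially the paper's: the paper likewise presents $HY_0$ as the quotient of the degree-$0$ algebra by the two-sided ideal generated by the elements $\partial c({\bf x}, i)$ and $\partial d(i, {\bf x})$, lists these relations from the labelled diagram $D^0(2,3)$ (five arcs, six crossings), and finishes with exactly the ``direct calculation'' of elimination that you describe. One small correction to your sketch: marked vertices contribute no generators and hence no relations to $CY(D)$ (only crossings do --- a marked vertex merely sits inside an arc), so there is nothing to ``handle first'' at the marked vertices; the entire presentation comes from the crossings.
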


\begin{proof} 
Arcs of $D^0(2, 3)$ are labeled by $1, 2, 3, 4, 5$, and 
crossings of $D^0(2, 3)$ are labeled 
by ${\bf A}, {\bf B}, {\bf C}, {\bf D}, {\bf E}, {\bf F}$, 
as illustrated in Figure \ref{example}. 
The diagram $D^0(2, 3)$ shows that 
the differential of $(CY_1(D^0(2, 3)), \partial)$ on generators is 
described as follows. 

\noindent 
$\partial c({\bf A}, i) = 
\mu a(1, i) 
+ a(2, i) 
- a(1, 5) 
a(5, i)$, 
$\partial d(i, {\bf A}) = 
a(i, 1) 
+ \mu a(1, 2) 
- a(i, 5) 
a(5, 1)$,

\noindent 
$\partial c({\bf B}, i) = 
\mu a(5, i) 
+ a(1, i) 
- a(5, 2) 
a(2, i)$, 
$\partial d(i, {\bf B}) = 
a(i, 5) 
+ \mu a(i, 1) 
- a(i, 2) 
a(2, 5)$, 

\noindent 
$\partial c({\bf C}, i) = 
\mu a(2, i) 
+ a(3, i) 
- a(2, 1) 
a(1, i)$, 
$\partial d(i, {\bf C}) = 
a(i, 2) 
+ \mu a(1, 3) 
- a(i, 1) 
a(1, 2)$, 

\noindent 
$\partial c({\bf D}, i) = 
\mu a(4, i) 
+ a(3, i) 
- a(4, 1) 
a(1, i)$, 
$\partial d(i, {\bf D}) = 
a(i, 4) 
+ \mu a(i, 3) 
- a(i, 1) 
a(1, 4)$,

\noindent 
$\partial c({\bf E}, i) = 
\mu a(5, i) 
+ a(1, i) 
- a(5, 4) 
a(4, i)$, 
$\partial d(i, {\bf E}) = 
a(i, 5) 
+ \mu a(i, 1) 
- a(i, 4) 
a(4, 5)$, 

\noindent 
$\partial c({\bf F}, i) = 
\mu a(1, i) 
+ a(4, i) 
- a(1, 5) 
a(5, i)$, 
$\partial d(i, {\bf F}) = 
a(i, 1) 
+ \mu a(i, 4) 
- a(i, 5) 
a(5, 1)$, 

\noindent 
where $i \in \{ 1, 2, 3, 4, 5 \}$. 
A direct calculation proves the theorem. 
\end{proof}

Let $T^n(2, 3)$ denote a 2-sphere 
in ${\mathbb{R}}^4$ 
that is obtained from $T(2, 3)$ 
by the twist-spinning construction, 
introduced by Zeeman \cite{zeeman}, 
where $n \in {\mathbb{Z}}$. 
Inoue \cite{inoue} exhibits an explicit construction of 
a diagram $D^n(2, 3)$ of an oriented 
marked graph representing $T^n(2, 3)$. 
The lower diagram in Figure \ref{example} illustrates 
a diagram $D^2(2, 3)$ of 
an oriented marked graph representing 
$T^2(2, 3)$. 
A direct calculation 
shows the following theorem.

\begin{theorem} \label{calculation-2twist-spun-trefoil} 
The 0-dimensional Yoshikawa homology of $T^2(2, 3)$, 
$HY_0(T^2(2, 3))$, is isomorphic to 
${\mathbb{Z}}[\mu, \mu^{-1}] \langle x, y \rangle / 
(\mu (1 + \mu) + x - y x, 
(1 + \mu) + \mu y - y x, 
\mu (1 + \mu) + \mu^{-1} x - \mu^{-1} x y x, 
- (1 + \mu) + x y - \mu x, 
\mu (1 + \mu) + y - x y, 
y + \mu (1 + \mu) - y (- \mu (1 + \mu) + x y), 
\mu (1 + \mu) + y - \mu^{-1} (- (1 + \mu) + x y) (- \mu (1 + \mu) + x y), 
(1 + \mu) + \mu x - \mu^{-1} (- (1 + \mu) + x y) (- \mu (1 + \mu) + x y))$.
\end{theorem}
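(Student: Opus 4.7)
The plan is to mimic the approach taken in Theorem \ref{calculation-spun-trefoil}. First I would read off the arc labels and crossing labels of the diagram $D^2(2, 3)$ shown in the lower part of Figure \ref{example}, using Inoue's construction \cite{inoue}. Because the twist parameter $n=2$ introduces additional strands compared with the spun case, $D^2(2,3)$ has more arcs and more crossings than $D^0(2, 3)$; this is precisely what gives rise to the larger number of relations in the final presentation.

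Next, for each crossing ${\bf X}$ of $D^2(2,3)$ and each arc-label $i$, I would write out $\partial c({\bf X}, i)$ and $\partial d(i, {\bf X})$ by applying the formulas in Section 2, reading the overarc label $o_{\bf X}$ and the underarc labels $\ell_{\bf X}, r_{\bf X}$ directly from the figure. This produces a finite list of polynomial expressions in the degree-$0$ generators $\{a(i,j) : i \neq j\}$ over $\mathbb{Z}[\mu, \mu^{-1}]$, subject to the convention $a(i,i) = 1 + \mu$.

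Then I would compute $HY_0(T^2(2,3))$ as the quotient of the degree-$0$ subalgebra of $CY(D^2(2, 3))$ by the two-sided ideal generated by all of these boundaries $\partial c({\bf X}, i)$ and $\partial d(i, {\bf X})$. Each crossing relation of the form $\mu a(\ell_{\bf X}, i) + a(r_{\bf X}, i) - a(\ell_{\bf X}, o_{\bf X}) a(o_{\bf X}, i) = 0$ lets one $a(\ell_{\bf X}, i)$ or $a(r_{\bf X}, i)$ be eliminated in favor of the others and $\mu$, and I would carry out these eliminations in a systematic order so that exactly two generators $x, y$ (corresponding to two distinguished $a(i_0, j_0)$ and $a(k_0, l_0)$) survive. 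The leftover relations involving only $x$, $y$, and $\mu$ should then match, after rearrangement, the eight relations listed in the statement.

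The hard part will be the bookkeeping: here eight relations remain rather than the three obtained in Theorem \ref{calculation-spun-trefoil}, so one must be careful both in choosing an elimination order that does not prematurely identify surviving generators and in verifying that no hidden algebraic consequence among the eight collapses them further. The first three relations agree with those of the spun case and reflect the common sub-diagram, while the remaining five encode the effect of the two extra crossings contributed by the twist; confirming that this enlarged relation set is genuinely needed (and not redundant) is what ultimately prevents the presentation from degenerating to the one of $HY_0(T^0(2,3))$, which is what Theorem \ref{yoshikawa-example} will exploit.
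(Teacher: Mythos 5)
Your proposal follows essentially the same route as the paper: one labels the arcs and crossings of Inoue's diagram $D^2(2,3)$, writes out $\partial c({\bf X}, i)$ and $\partial d(i,{\bf X})$ from the Section 2 formulas, and presents $HY_0$ as the degree-0 algebra modulo the two-sided ideal these boundaries generate, reducing by elimination of the $a(i,j)$ to the stated two-generator, eight-relation presentation --- indeed the paper's entire justification is the phrase ``a direct calculation shows the following theorem.'' The only caveat is that, like the paper, you stop short of actually carrying out the elimination, so the eight relations are asserted rather than derived.
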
 

\vskip 6pt

\noindent 
{\it Proof of Theorem \ref{yoshikawa-example}}. 
Theorem \ref{calculation-spun-trefoil} shows that 
the algebra map 
$HY_0(T^0(2, 3)) \otimes {\mathbb{Z}}/3{\mathbb{Z}} 
\to {\mathbb{Z}}/3{\mathbb{Z}}$ 
is realized by three triples 
$(\mu, x, y) = (1, 2, 2), (2, 0, 0), (2, 2, 1)$. 
Theorem \ref{calculation-2twist-spun-trefoil} shows that 
$HY_0(T^2(2, 3)) \otimes {\mathbb{Z}}/3{\mathbb{Z}} 
\to {\mathbb{Z}}/3{\mathbb{Z}}$ 
is realized by two triples 
$(\mu, x, y) = (1, 2, 2), (2, 0, 0)$. 
Since the numbers of algebra maps are distinct, 
$HY_0(T^0(2, 3))$ is not isomorphic to $HY_0(T^2(2, 3))$. 
\hfill $\qed$

\end{document}